\documentclass[preprint,11pt]{elsarticle}
\makeatletter
\def\ps@pprintTitle{%
 \let\@oddhead\@empty
 \let\@evenhead\@empty
 \def\@oddfoot{\centerline{\thepage}}%
 \let\@evenfoot\@oddfoot}
\makeatother
\usepackage{amssymb,amsmath,amsthm}
\usepackage{epsfig}
\usepackage{color}
\usepackage{makeidx}
\usepackage{bbm}
\usepackage[top=1in, bottom=1in, inner=1in, outer=1in]{geometry}
\usepackage{setspace}

\newtheorem{theorem}{Theorem}
\newtheorem*{thma}{Theorem A}
\newtheorem*{thmb}{Theorem B}
\newtheorem{definition}{Definition}
\newtheorem{lemma}{Lemma}
\newtheorem{proposition}{Proposition}
\newtheorem{corollary}{Corollary}
\newtheorem*{remark}{Remark}
\newcommand*\xbar[1]{%
  \hbox{%
    \vbox{%
      \hrule height 0.5pt 
      \kern0.4ex
      \hbox{%
        \kern-0.15em
        \ensuremath{#1}%
        \kern-0.15em
      }%
    }%
  }%
}

\begin{document}
\begin{frontmatter}

\title{Optimal Survival Strategy for Branching Brownian Motion in a Poissonian Trap Field}

\author[ozyegin]{Mehmet \"{O}z}
\ead{mehmet.oz@ozyegin.edu.tr}
\ead[url]{https://www.ozyegin.edu.tr/en/faculty/mehmetoz}

\author[colorado]{J\'{a}nos Engl\"{a}nder}
\ead{janos.englander@colorado.edu}
\ead[url]{http://www.colorado.edu/math/janos-englander}

\address[ozyegin]{Department of Natural and Mathematical Sciences, Faculty of Engineering, \"{O}zye\u{g}in University, Istanbul, Turkey}
\address[colorado]{Department of Mathematics, University of Colorado at
Boulder, Boulder, CO-80309, USA}

\begin{abstract}
We study a branching Brownian motion $Z$ with a generic branching law, evolving in $\mathbb{R}^d$, where a field of Poissonian traps is present. Each trap is a ball with constant radius. We focus on two cases of Poissonian fields: a uniform field and a radially decaying field. Using classical results on the convergence of the speed of branching Brownian motion, we establish precise results on the population size of $Z$, given that it avoids the trap field, while staying alive up to time $t$. The results are stated so that each gives an `optimal survival strategy' for $Z$. As corollaries of the results concerning the population size, we prove several other optimal survival strategies concerning the range of $Z$, and the size and position of clearings in $\mathbb{R}^d$. We also prove a  result about the hitting time of a single trap by a branching system (Lemma \ref{lemma1}), which may be useful in a completely generic setting too.

{\it Inter alia}, we answer some open problems raised in [{\em Mark.\ Proc.\ Rel.\ Fields}\ \textbf{9} (2003), 363 -- 389].
\end{abstract}

\vspace{3mm}

\begin{keyword}
Branching Brownian motion \sep Poissonian traps \sep Random environment  \sep Hard
obstacles \sep  Optimal survival strategy
\vspace{3mm}
\MSC[2010] 60J80 \sep 60K37 \sep 60F10
\end{keyword}

\end{frontmatter}

\pagestyle{myheadings}
\markright{OPTIMAL SURVIVAL FOR BBM IN A POISSONIAN TRAP FIELD\hfill}

\section{Introduction}\label{intro}

Branching Brownian motion (BBM) in Poissonian trap fields has been studied recently in \cite{E2000,E2003,E2008,OC2013,OCE2017}. The most classical problem on this model is the large time asymptotics of the survival probability of the BBM, where one defines survival up to time $t$ to be the event that none of the particles of the BBM has hit the trap field until that time. Another classical problem is that of the optimal survival strategies: How must have the system behaved (what strategy must it have followed) given that it has avoided the traps up to time $t$? In this work, we study the optimal survival for a BBM that evolves in $\mathbb{R}^d$, where a Poissonian trap field is present. Our focus is on the population size. Conditioned on survival among traps, we expect the system to suppress branching and produce fewer particles than it otherwise would (had it not been conditioned on survival). Here, we quantify how much the branching would be suppressed. Investigation of this problem leads us to proving an important lemma of independent interest, which provides an upper bound that is valid for large $t$ on the survival probability of a BBM in a large class of random (or deterministic) trap fields, not restricted to Poissonian fields.

Next, we describe the two sources of randomness.

\medskip
{\bf 1. Branching Brownian motion:} Let $Z=(Z(t))_{t\geq 0}$ be a $d$-dimensional BBM with initial distribution $\delta_0$, branching rate $\beta>0$, and offspring distribution $(p_k)_{k\in\mathbb{N}_0}$, where $t$ represents time. The process starts with a single particle at the origin, which performs a Brownian motion in $\mathbb{R}^d$ for a random time which is
distributed exponentially with constant parameter $\beta$. Then, the particle dies and simultaneously gives birth to a random number of particles distributed according to the offspring distribution $(p_k)_{k\in\mathbb{N}_0}$, where $p_k\geq 0$ for each $k\in\mathbb{N}_0$ and $\sum_{k=0}^\infty p_k=1$. Similarly, each offspring particle repeats the same procedure independently of all others and the parent, starting from the position of her parent. In this way, one obtains a measure-valued Markov process $Z=(Z(t))_{t\geq0}$, where for each $t\geq 0$, $Z(t)$ can be viewed as a particle configuration on $\mathbb{R}^d$. By assumption, $Z(0)=\delta_0$. Define the process $|Z|=(|Z(t)|)_{t\geq0}$, where $|Z(t)|$ represents the population size of $Z$ at time $t$. The number of particles in generation $n$ of $|Z|$ is a Galton-Watson process $N=(N(n))_{n\in\mathbb{N}}$ with offspring distribution $(p_k)_{k\in\mathbb{N}}$. The initial particle present at $t=0$ constitutes the $0$th generation, the offspring of the initial particle constitute the $1$st generation, and so forth. We denote the extinction time of the process $|Z|$ by $\tau$, which is formally defined as $\tau=\inf\left\{t\geq 0:|Z(t)|=0\right\}$, where we set $\inf\emptyset=\infty$. We then denote the event of extinction of the process $|Z|$ by $\mathcal{E}$, and formally write $\mathcal{E}=\left\{\tau<\infty \right\}$. We use the term \textit{non-extinction} for the event $\mathcal{E}^c$. In this work, $P$ and $E$ denote respectively the probability law and corresponding expectation for the BBM. Finally, for $t\geq 0$, let
\begin{equation} R(t):=\underset{s \in\text{[0,t]}}{\bigcup} \text{supp}(Z(s)) \nonumber
\end{equation}
be the range of $Z$ up to time $t$.

\medskip
{\bf 2. Trap field:} The branching Brownian motion is assumed to live in $\mathbb{R}^d$, to which a `random trap field' is attached. That is,
besides the process $Z$, on some additional space $(\Omega,\mathbb{P})$  (with expectation $\mathbb{E}$), we also consider a $d$-dimensional Poisson random measure $\Pi$,  with a boundedly finite mean measure $\nu$.
By a `trap' associated to a trap point at $x\in\mathbb{R}^d$, we mean a closed ball of fixed radius $r>0$ centered at $x$;
by a (random) `trap field', we mean the set
\[ K:=\underset{x_i \in \text{supp}(\Pi)}{\bigcup} \bar{B}(x_i,r), \]
where $\bar{B}(x,r)$ denotes the closed ball centered at $x\in\mathbb{R^d}$ with radius $r$.  By a `clearing', we mean a region in $\mathbb{R}^d$ that is free of traps, that is disjoint from $K$.  

\begin{definition}[Survival] \rm We define $T:=\inf\left\{t\geq 0:R(t)\cap K\neq\emptyset\right\}$ to be the `first trapping time' of the BBM. By `survival up to time $t>0$,' we mean the event $\mathsf{S_t}:=\left\{T>t\right\}\cap \mathcal{E}^c$, which, in case of $p_0=0$, reduces of course to $\left\{T>t\right\}$. This is the event that neither the internal branching mechanism nor the trap field has killed the process by $t$; thus, {\bf survival is  a subset of non-extinction}, according to our terminology.
\end{definition}

For the two types of Poissonian fields that we consider here, the following asymptotics for the annealed trap-avoiding probabilities have been derived in \cite{O2016} and \cite{OCE2017}, respectively. Let $\text{d}x$ denote the Lebesgue measure.    

\begin{thma}[Survival asymptotics in a uniform field; $d\geq 2$, \cite{O2016}] \label{theorem01}
Let $\mu>1$. Suppose that $\text{d}\nu/\text{d}x=v$, $v>0$. Then, for $d\geq 2$,
\begin{equation}  \underset{t\rightarrow\infty}{\lim}\:\frac{1}{t}\log(\mathbb{E}\times P)\left(T>t\mid \mathcal{E}^c\right)=-\beta \alpha. \label{001}
\end{equation}
\end{thma}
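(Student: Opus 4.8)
The plan is to compute the exponential decay rate of the annealed probability $(\mathbb{E}\times P)(\mathsf{S}_t)$ and transfer it to \eqref{001}. Non-extinction is an event of the branching alone, so $(\mathbb{E}\times P)(\mathcal{E}^c)=P(\mathcal{E}^c)=1-q$, where $q<1$ (because $\mu>1$) is the extinction probability of $N$; dividing by this constant does not affect the rate, so it suffices to show $\tfrac1t\log(\mathbb{E}\times P)(\mathsf{S}_t)\to-\beta\alpha$. The picture behind this is that, conditioned on $\mathcal{E}^c$, a BBM avoiding a \emph{uniform} field cannot afford exponential population growth: on any sub-interval of $[0,t]$ of macroscopic length on which $|Z|$ is of exponential size, its range is of linear size and meets of order $t^d$ traps, which for $d\ge2$ is ruled out at cost $e^{-O(t^d)}\ll e^{-\beta\alpha t}$. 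The optimal strategy is therefore to \emph{suppress the branching} so that $|Z|$ stays essentially bounded on $[0,t]$ while the small surviving cloud hides in a clearing; and $\alpha$ is the (normalised) large-deviation rate of this suppression, namely $\beta\alpha=\lim_{K\to\infty}\lim_{t\to\infty}\bigl(-\tfrac1t\log P(\sup_{s\le t}|Z(s)|\le K,\,|Z(t)|\ge1)\bigr)$, which is the quantity made explicit in \cite{O2016}.

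For the lower bound I would make this strategy quantitative. Fix $K\in\mathbb{N}$, let $\omega_d$ denote the volume of the unit ball, put $R_t:=t^{1/(d+2)}$, and intersect the events: (i) $\Pi(\bar B(0,r))=0$, of constant probability $e^{-v\omega_d r^d}$; (ii) $\Pi(B(0,R_t+r))=0$, so that $B(0,R_t)$ is trap-free, of probability $e^{-v\omega_d(R_t+r)^d}=e^{-O(t^{d/(d+2)})}=e^{-o(t)}$; (iii) $\sup_{s\le t}|Z(s)|\le K$ and $|Z(t)|\ge1$; (iv) every particle of $Z$ stays inside $B(0,R_t)$ up to time $t$. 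By independence of $Z$ and $\Pi$, the annealed probability of the intersection is at least the product of the probabilities of (i), (ii), $P(\mathrm{iii})$ and $P(\mathrm{iv}\mid\mathrm{iii})$. Conditionally on (iii) the at most $K$ confined paths stay in $B(0,R_t)$ up to $t$ with probability $\ge e^{-C_K\,t/R_t^{2}}=e^{-C_K\,t^{d/(d+2)}}=e^{-o(t)}$ (the exponent being governed by the principal Dirichlet eigenvalue of $-\tfrac12\Delta$ on $B(0,R_t)$, of order $R_t^{-2}$), while $P(\mathrm{iii})\ge e^{-(\beta\alpha+\varepsilon_K)t}$ for all large $t$, with $\varepsilon_K\downarrow0$ as $K\to\infty$, by the definition of $\alpha$. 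Multiplying, and letting $t\to\infty$ and then $K\to\infty$, gives $\liminf_{t\to\infty}\tfrac1t\log(\mathbb{E}\times P)(\mathsf{S}_t)\ge-\beta\alpha$.

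For the upper bound, fix a threshold $K_t\to\infty$ tending to infinity slowly, and show that on $\mathsf{S}_t$ the population exceeds $K_t$ only on a set of times of Lebesgue measure $o(t)$. Whenever $|Z(s)|>K_t$ the configuration carries more than $K_t$ particles, whose descendant sub-systems avoid the field for a further unit of time only with probability $p(K_t)\to0$ --- this is where Lemma~\ref{lemma1} enters, bounding the first trapping time of a branching sub-system, together with the fact that in a uniform field a region of diameter $\rho$ carries of order $v\rho^d$ traps, the rare trap-poor regions being absorbed (in the annealed average over $\Pi$) by a Sznitman-type coarse-graining of the obstacle field; a large supercritical cloud kept confined for a macroscopic time is likewise penalised, by $e^{-cK_t\cdot\Theta(t)}$. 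Summing these estimates shows that, up to an $e^{o(t)}$ factor, $(\mathbb{E}\times P)(\mathsf{S}_t)\le P(\sup_{s\le t}|Z(s)|\le K_t,\,|Z(t)|\ge1)$, and this last probability is $\le e^{-\beta\alpha t+o(t)}$ by the spectral-gap estimate for the branching mechanism restricted to $\{1,\dots,K_t\}$, provided $K_t\to\infty$ slowly enough. Together with the lower bound this proves \eqref{001}.

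The hard part will be the upper bound, for two reasons. First, the geometric input: establishing rigorously, for a uniform field and $d\ge2$, that a branching cloud allowed an exponentially large population over a macroscopic time span cannot thread through the traps except with super-exponentially small probability --- precisely what Lemma~\ref{lemma1} and the coarse-graining are designed for, but coupling them to the branching uniformly in $t$, and controlling the many sub-systems (correlated, given $\Pi$), is delicate. Second, the sharpness of the constant: one must verify that the Donsker--Varadhan cost $e^{-O(t^{d/(d+2)})}$ of confining the suppressed cloud to an atypically large clearing does not leak into the exponential rate, so that the rate is \emph{exactly} the branching-suppression rate $\beta\alpha$ and nothing larger. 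The dimension hypothesis $d\ge2$ enters through $t^d\gg t$: for $d=1$ a clearing of linear size would be only exponentially, not super-exponentially, costly, and a competing strategy --- hence a different rate --- would have to be reckoned with.
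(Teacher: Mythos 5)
Theorem A is not proved in this paper --- it is quoted from \cite{O2016} (and, for $p_0=0$, $\alpha=1$, from \cite{E2000}) and used as an input, so there is no in-paper proof to compare your sketch against. Taken on its own terms, the overall architecture of your argument --- a Donsker--Varadhan lower bound (clear a ball of radius $t^{1/(d+2)}$, confine the suppressed cloud there at cost $e^{-O(t^{d/(d+2)})}=e^{-o(t)}$, pay the exponential price only for suppressing the branching) and an upper bound ruling out exponential growth because a linear clearing costs $e^{-ct^{d}}$ when $d\ge2$ --- does match the approach of \cite{E2000,O2016}, so the strategy is sound.

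However, two of the key steps are asserted rather than proved, and there is a slip in the lower bound. (1) The identity $\beta\alpha=\lim_{K\to\infty}\lim_{t\to\infty}\bigl(-\tfrac1t\log P(\sup_{s\le t}|Z(s)|\le K,\,|Z(t)|\ge1)\bigr)$ is invoked ``by the definition of $\alpha$'', but $\alpha$ is defined as $1-f'(q)$; the equivalence requires the skeleton decomposition (the skeleton is itself a BBM with rate $\beta\alpha$ and $p_0^*=p_1^*=0$) together with control of the doomed-particle contribution, and your whole lower bound leans on this unproved claim. (2) The upper bound is carried entirely by the reduction $(\mathbb{E}\times P)(\mathsf{S_t})\le e^{o(t)}P(\sup_{s\le t}|Z(s)|\le K_t,\,|Z(t)|\ge1)$, and the mechanism you propose --- control the measure of the exceedance set, then penalise correlated sub-systems via Lemma~\ref{lemma1} plus Sznitman-type coarse-graining --- is a plan, not an argument; note also that Lemma~\ref{lemma1} assumes $p_0=p_1=0$, so it cannot be applied directly to the full process when $\mu>1$ with $p_0>0$ without first passing to the skeleton. (3) In the lower bound your events (ii)--(iv) only give $\{T>t\}\cap\{|Z(t)|\ge1\}$, which is not $\mathsf{S_t}=\{T>t\}\cap\mathcal{E}^c$; you need to intersect with $\mathcal{E}^c$ (e.g.\ run the construction along the skeleton) and re-estimate, otherwise you have bounded a different quantity than the one in \eqref{001}.
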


Now consider a trap field where the intensity is radially decaying as
\begin{equation} \frac{\text{d}\nu}{\text{d}x} \sim \frac{l}{|x|^{d-1}}, \quad
|x|\rightarrow\infty, \quad l>0. \label{eq00}
\end{equation}

\begin{thmb}[Survival asymptotics in a radially decaying field; $d\geq 1$, \cite{OCE2017}] \label{theorem02}
Let $\mu>1$. Suppose that $\text{d}\nu/\text{d}x$ exists, is continuous on $\mathbb{R}^d$, and satisfies (\ref{eq00}). For $r,b \geq 0$, define
\begin{equation} g_d(r,b)=\int_{B(0,r)} \frac{\text{d}x}{|x+be|^{d-1}}, \label{003}
\end{equation}
where $e=(1,0,\ldots,0)$ is the unit vector in the direction of the first coordinate. Then, for $d\geq 1$,
\begin{equation} \underset{t\rightarrow\infty}{\lim}\:\frac{1}{t}\log(\mathbb{E}\times P)\left(T>t\mid \mathcal{E}^c\right)=-I(l,f,\beta,d), \label{002} 
\end{equation}
where
\begin{equation} I(l,f,\beta,d)=\underset{\eta \in [0,1],c \in [0,\sqrt{2\beta}]}{\text{min}} \left\{\beta\alpha\eta+\frac{c^2}{2\eta}+l g_d(\sqrt{2\beta m}(1-\eta),c)\right\}. \label{I}
\end{equation}
(For $\eta=0,c=0$, set $c^2/2\eta=0$, and for $\eta=0,c>0$, set $c^2/2\eta=\infty$.)
\end{thmb}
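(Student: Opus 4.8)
\noindent\emph{Proof plan.} The plan is to establish matching bounds, $\liminf_{t\to\infty}\frac1t\log(\mathbb E\times P)(T>t\mid\mathcal E^c)\ge-I$ and $\limsup_{t\to\infty}\frac1t\log(\mathbb E\times P)(T>t\mid\mathcal E^c)\le-I$. Since $\mathcal E^c$ is an event of the BBM alone and $\Pi$ is independent of $Z$, integrating out $\Pi$ first gives $(\mathbb E\times P)(T>t\mid\mathcal E^c)=P(\mathcal E^c)^{-1}E[\mathbf 1_{\mathcal E^c}\,e^{-\nu(R(t)^{(r)})}]$, where $R(t)^{(r)}$ is the closed $r$-neighbourhood of $R(t)$; so everything reduces to this expectation, in which $\nu$ is deterministic and only $Z$ is random. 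Two facts about the field will be used repeatedly: the scaling identity $g_d(\rho t,ct)=t\,g_d(\rho,c)$, immediate from (\ref{003}) via $x\mapsto tx$; and the consequence of (\ref{eq00}) and the continuity of $\mathrm d\nu/\mathrm dx$ that $\nu\big(B(cte,\rho t)\big)=l\,t\,g_d(\rho,c)(1+o(1))$ as $t\to\infty$, uniformly on compacts, while $\nu$ of a bounded set is $O(1)$ and $\nu$ of a suitably thin tube of length $O(t)$ about a ray is $o(t)$ for $d\ge2$.

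For the lower bound, fix $\epsilon>0$ and $(\eta,c)$ with $\beta\alpha\eta+\frac{c^2}{2\eta}+l\,g_d(\sqrt{2\beta m}(1-\eta),c)\le I+\epsilon$, and impose a two-phase prescription on $Z$: on $[0,\eta t]$ a spine particle moves near-linearly from $0$ to $cte$ while the lineage above it does not spread macroscopically (its range stays within $o(t)$ of the spine path); on $[\eta t,t]$ the subtree rooted at the spine's location at time $\eta t$ evolves freely and, with probability bounded away from $0$, survives with range inside $B(cte,(\sqrt{2\beta m}(1-\eta)+\epsilon)t)$. Call this event $A_t$. On $A_t$, $R(t)$ lies in a deterministic set $U_t$ (thin tube $\cup$ enlarged ball), so $E[\mathbf 1_{A_t}e^{-\nu(R(t)^{(r)})}]\ge P(A_t)\,e^{-\nu(U_t^{(r)})}\ge P(A_t)\,e^{-l\,t\,g_d(\sqrt{2\beta m}(1-\eta),c)-O(\epsilon t)}$ by continuity of $g_d$. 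The three contributions to $P(A_t)$ are: $e^{-c^2t/(2\eta)+o(t)}$ for the spine displacement (Gaussian estimate); $e^{-\beta\alpha\eta t+o(t)}$ for the lineage not spreading over time $\eta t$ (the translation-invariant estimate underlying Theorem~A); and a positive constant for the free subtree, by the law of large numbers for the speed of BBM and $\mu>1$. Multiplying, inserting above, optimizing over $(\eta,c)\in[0,1]\times[0,\sqrt{2\beta}]$ and letting $\epsilon\downarrow0$ gives the lower bound. (When $d=1$ the minimum in (\ref{I}) is at $c=0$, so the tube degenerates and no separate treatment is needed; the cases $\eta\in\{0,1\}$ use the stated conventions.)

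For the upper bound, fix $\epsilon>0$ and decompose $\{\mathcal E^c,T>t\}$ — on which $R(t)$ is connected, contains $0$, and lies in the clearing component of $K^c$ through $0$ — according to a discretized macroscopic profile of $R(t)$: roughly, the normalized radius $\rho$ of the smallest enclosing ball $B(y,\rho t)\supseteq R(t)$, the normalized centre $c=|y|/t$, and the amount of branching suppression (encoded by $\eta$, with $\rho=\sqrt{2\beta m}(1-\eta)$ in the maximally thick case). The speed of the running maximum confines $\rho$ to $[0,\sqrt{2\beta m}+\epsilon]$, and a crude bound confines $c$, so after discretization on $\epsilon$-nets there are only polynomially many cells. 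The crux is that the two costs combine correctly on each cell $(\eta,c)$: one has $\nu(R(t)^{(r)})\ge l\,t\,g_d(\sqrt{2\beta m}(1-\eta),c)-O(\epsilon t)$ (either $R(t)$ genuinely fills the ball of radius $\approx\sqrt{2\beta m}(1-\eta)t$ around $\approx cte$ — the speed LLN now used as a lower bound on spreading — or it is thinner, which only increases the BBM cost), while $P(\text{cell})\le e^{-(\beta\alpha\eta+c^2/(2\eta))t+o(t)}$: suppressing branching so as not to fill a ball of radius $>\sqrt{2\beta m}(1-\eta)t$ before reaching the sparse region costs at rate $\ge\beta\alpha\eta$ (again Theorem~A, as a lower bound), and displacing the still-thin trajectory over distance $ct$ in time $\eta t$ costs $\ge c^2t/(2\eta)$. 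Multiplying, summing over the polynomially many cells, taking $\frac1t\log$ and letting $t\to\infty$, then $\epsilon\downarrow0$, yields $\limsup\le-I$.

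The hard part is the BBM-side estimate in the upper bound: one must show that the law of the range of a supercritical BBM charges the event ``range of normalized radius $\rho=\sqrt{2\beta m}(1-\eta)$ with centre displaced by $ct$'' at exponential rate exactly $\beta\alpha\eta+c^2/(2\eta)$, and that a range of that radius must fill the corresponding ball so the field genuinely pays $l\,g_d(\rho,c)$. This is a joint large-deviation statement coupling a range-confinement/population event — where $\alpha$, hence $f$, enters through sharp estimates on the underlying Galton--Watson process and its generating function — with a ballistic-displacement event; their competition (a shorter thin phase allows more subsequent spreading but less time to reach the sparse part of the field) is precisely what produces the two-variable variational problem (\ref{I}). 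The remaining points are more routine: uniformity of the $o(t)$ errors over the nets; the case $p_0>0$, where one notes that thinning the population by killing lineages does not shrink the range, so range confinement remains the binding cost; and checking that the unconstrained infimum over $c\ge0$ in (\ref{I}) is attained in $[0,\sqrt{2\beta}]$, so that the restriction there is harmless.
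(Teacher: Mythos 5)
Theorem B is imported verbatim from \cite{OCE2017} and is not re-proved in this paper, so there is no internal proof to compare against; what can be judged is whether your outline matches the route of the cited proof as reflected in the machinery this paper does use. Your sketch captures the correct structure: a two-phase lower-bound construction (suppress branching while displacing on $[0,\eta t]$, then free growth in a cleared ball, with the field cost computed via the scaling identity $g_d(\rho t, ct)=t\,g_d(\rho,c)$), and a discretized upper bound over $(\eta,c)$-cells with a joint large-deviation estimate on each cell. This is exactly the shape visible in the paper's own proof of Theorem~\ref{theorem2} — discretization of $\eta=i/n$ via $P^{(i,n)}_t$, the bound \eqref{neweq} citing the proof of \cite[Thm.1]{OCE2017}, and the four-step survival strategy used to bound $(\mathbb E\times E)[Y]$ from below — and the lemma you flag as ``the hard part,'' namely that range confinement plus ballistic displacement for a BBM conditioned on non-extinction has exponential cost exactly $\beta\alpha\eta+c^2/(2\eta)$, is precisely what \cite{OCE2017} supplies (Lemma 4, Proposition 2, and Lemma 5 there). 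Two local inaccuracies worth fixing: the branching-suppression cost $e^{-\beta\alpha\eta t+o(t)}$ does not come from ``the translation-invariant estimate underlying Theorem~A'' but from a direct comparison of the skeleton's continuous-time Galton--Watson process (rate $\beta\alpha$, $p_0^*=0=p_1^*$) with a Yule process, as in \eqref{eq3}--\eqref{eq5}; and for $p_0>0$ your ``spine'' language must be replaced by the skeleton/doomed decomposition of Section~\ref{section5}, since on $[0,\eta t]$ the single skeletal line throws off doomed subtrees whose ranges must also be kept inside the cleared region (sub-macroscopic, by the type of estimate in Lemma~\ref{lemma2}, but this has to be said). As written your proposal is a correct plan that correctly locates where the real work lies, rather than a proof; the real work is exactly the content of the cited reference.
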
 

The constant $\alpha$ appearing in \eqref{001} and \eqref{I} above is defined as $\alpha:=1-f'(q)$, where $q:=P(\mathcal{E})$. When $p_0=0$, it is clear that $P(\mathcal{E})=0$ so that the conditioning on $\mathcal{E}^c$ is redundant, and since $p_1=0$ by assumption, $\alpha=1$. 

In this paper, the probability measure of interest is $(\mathbb{E}\times P)(\:\cdot \mid \mathsf{S_t})$, the annealed (averaged) probability conditioned on survival  up to $t$. 

\begin{definition}[Optimal survival strategy]\rm
By an (annealed) `optimal survival strategy,' we mean a collection of events $\left\{A_t\right\}_{t>0}$ indexed by time, such that 
\begin{equation} \underset{t\rightarrow\infty}{\lim}(\mathbb{E}\times P)\left(A_t \mid \mathsf{S_t}\right)=1. \nonumber
\end{equation}
\end{definition}
We look for optimal survival strategies concerning mainly the population size. 

\smallskip
The problem of trap-avoiding asymptotics for BBM among Poissonian traps has been first studied by Engl\"{a}nder in \cite{E2000}, where a uniform field was considered in $d\geq 2$. Then, in search for an extension to the case $d=1$, Engl\"{a}nder and den Hollander \cite{E2003} studied the more interesting case where the trap intensity was radially decaying as given in (\ref{eq00}). In both \cite{E2000} and \cite{E2003}, the main result was the exponential asymptotic decay rate of the annealed survival probability as $t\rightarrow\infty$, and the branching was taken to be strictly dyadic, i.e., $p_2=1$. In addition, in \cite{E2003}, optimal survival strategies of the type we consider here were proved (see Theorem 1.3 (i)-(iv) therein). Part of the work in this paper could be regarded as a refinement and generalization of the corresponding work in \cite{E2003}.         

In \cite{E2000}, optimal survival strategies were not studied. In Theorem~\ref{theorem1}, we consider a uniform field in $\mathbb {R}^d,\ d\ge 2$, as in \cite{E2000}, and prove that conditioned on survival up to time $t$, for any $0<\widehat\varepsilon<1$, with overwhelming probability, there is only $1$ particle present at time $t(1-\widehat\varepsilon)$ as $t\rightarrow\infty$, which means complete suppression of branching occurs with overwhelming probability. In Theorem 1.3 (iii) in \cite{E2003}, where the setting was that of the radially decaying trap intensity in (\ref{eq00}), it was shown that conditioned on survival up to time $t$, the population size at time $(\eta^*-\widehat\varepsilon)t$ is at most $\lfloor t^d \rfloor$ for large $t$ with overwhelming probability. (The constant $\eta^*$ appears as one of the minimizers in \eqref{I}.) Here, we improve this bound to just $1$ in Theorem~\ref{theorem2}. 

The reason the decay rate given in (\ref{eq00}) is the `interesting' one is that it is in fact the `borderline' one. This is explained in Theorem 1.3 in \cite{E2003}, which describes the optimal survival strategy, as it depends on the `fine tuning constant' $\ell$ (we use  $l$ instead of $\ell$ in the present paper). Namely, it was shown that 
\begin{itemize}
\item[--]
In the low intensity regime $\ell<\ell_{cr}$, the system clears a ball of radius $\sqrt{2\beta}\,t$ from traps, and until time $t$ stays inside this ball and branches at rate $\beta$.

\item[--]
In the high intensity regime $\ell>\ell_{cr}$:
\begin{itemize}
\item[$d=1$:]
The system clears an $o(t)$-ball (i.e., a ball with radius $>a$ but $\ll t$), and until time $t$ suppresses the branching (i.e., produces a polynomial number of particles) and stays inside this ball.

\item[$d\geq 2$:]
The system clears a ball of radius $\sqrt{2\beta}\,(1-\eta^*)t$ around a point at distance $c^*t$ from the origin, suppresses the branching until time $\eta^*t$, and during the remaining time $(1-\eta^*)t$  branches at rate $\beta$.

\end{itemize}
\end{itemize}
(See Theorem 1.3 in \cite{E2003} for the precise statements.)

Hence, the decay considered is indeed the `borderline' one, where the behavior of the system depends only on the constant $\ell$, and exhibits a change of behavior at the crossover. If one considers a larger (smaller) decay order, the optimal strategy will simply follow the one exhibited when the decay is as in (\ref{eq00}) and $\ell>\ell_{cr}$ ($\ell<\ell_{cr}$); although if the decay order is very large, then $\eta^*=1$  (complete suppression of branching) may occur even for $d\ge 2$, while $0<\eta^*<1$ is always the case in the high intensity regime studied in \cite{E2003}.

When the BBM is supercritical and $p_0>0$, we have to take into account that extinction for the underlying Galton-Watson process has positive probability, and hence condition the process on non-extinction. In this case, the particles are grouped into those with infinite or finite line of descent, so-called `skeleton' and `doomed' particles, respectively, and in this way a `skeleton decomposition' is performed to analyze the problem. In \cite{OC2013} and \cite{O2016}, the work in \cite{E2000} was extended to a BBM with a general offspring distribution, where the possibility of $p_0>0$ was allowed. Likewise in \cite{OCE2017}, a general offspring distribution is considered for the BBM, and the work in \cite{E2003} on the radially decaying trap field is extended to cover the case $p_0>0$. Here we allow for $p_0>0$, and extend Theorem~\ref{theorem1} and Theorem~\ref{theorem2} to Theorem~\ref{theorem3} and Theorem~\ref{theorem4}, respectively, to obtain optimal survival strategies on the population size of both the skeleton and doomed particles.   

In the final section, we use our optimal survival results on the population size to prove several others of different types in the same spirit as in \cite[Thm.1.3]{E2003}, concerning the range of the BBM, and the size and position of trap-free regions (i.e., clearings) in $\mathbb{R}^d$.

We refer the reader to \cite{E2007} for a survey on the topic of BBM among Poissonian traps, and to \cite{E2008,LV2009} for various related problems. Analogous questions in the discrete setting could also be asked, where the continuum $\mathbb{R}^d$ is replaced by the integer lattice $\mathbb{Z}^d$, and the BBM is replaced by the branching random walk. In \cite{ADS2016}, a random walk among a randomly moving field of traps on $\mathbb{Z}^d$ was studied, and it was shown that conditioned on survival up to time $t$, the random walk is subdiffusive. In the discrete setting, we note that the survival asymptotics of the random walk was studied earlier in \cite{R2012} for both the annealed and quenched cases.    

The organization of the paper is as follows. In Section~\ref{section2}, we state the main results. Section~\ref{section3} is devoted to the central lemma of this work, on which the main results are built. This lemma is a general result that applies to a much broader class of trap fields on $\mathbb{R}^d$ than the ones considered in this work. In Section~\ref{section4}, we give the proofs of the main results. In Section~\ref{section5}, we state and prove a lemma of independent interest about the decomposition of a supercritical continuous-time branching process, which is used in the subsequent section to generalize the main results to the case $p_0>0$.  In Section~\ref{section6}, we extend the main results to the case $p_0>0$. The first six sections study the optimal survival strategies on the population size of the branching system. Finally, in Section~\ref{section7}, we provide optimal survival results on the range of the branching system, and the size and position of the clearings in $\mathbb{R}^d$, as corollaries of the results on population size.

\section{Results}\label{section2}

Our main results will be stated so that each gives an optimal survival strategy. Let us now introduce further notation in order to state the results. Let $f$ be the probability generating function (p.g.f.) of the offspring distribution and $\mu$ be the mean number of offspring:
\[f(s):=\sum_{j=0}^{\infty} p_j s^j;\qquad \mu:=\sum_{j=0}^{\infty}j p_j,
\]
and define
\[m:=\mu-1.
\] 
Note the significance of $m$: it is the net average growth per particle since by assumption a particle dies at the moment it gives birth to offspring. 

Throughout this work, we assume that $\mu<\infty$, and without loss of generality that $p_1=0$ (as nonzero $p_1$ can be absorbed into the branching rate $\beta$). Also, from elementary theory of branching processes (see for example \cite{AH1983, AN1972}), recall the following fact: $P(\mathcal{E})=1\Leftrightarrow \mu\leq 1$. Processes for which $\mu>1$ are called supercritical. It is clear that if $p_0=0$, then $P(\mathcal{E})=0$.

The following two theorems constitute the main results of this paper. They both give the population size of the branching system on the condition of survival among traps, and hence quantify how much the branching would be suppressed given survival. The setting in the first theorem is a uniform trap field, whereas the second one is concerned with a radially decaying field. Lemma~\ref{lemma1} in Section~\ref{section3} is central in the proof of both theorems. One should keep in mind that without the conditioning on survival, the expected population size of a `free' BBM at time $t$ is $\exp[\beta m t]$ for $t\geq 0$. Moreover, we have the limit theorem saying that $\underset{t\rightarrow\infty}{\lim}|Z(t)|e^{-\beta m t}$ exists almost surely (see for example \cite[Thm.III.7.1]{AN1972}). 
  
As before, $\text{d}x$ denotes the Lebesgue measure.	

\begin{theorem}[Survival in a uniform field; $d\geq 2$] \label{theorem1}
Let $p_0=0$. Suppose that $\text{d}\nu/\text{d}x=v$, $v>0$. Then for $d\geq 2$ and $0<\widehat\varepsilon<1$,
\begin{equation} \underset{t\rightarrow\infty}{\lim}(\mathbb{E}\times P)\left(|Z((1-\widehat\varepsilon)t)|=1\mid \mathsf{S_t}\right)=1. \nonumber
\end{equation}
\end{theorem}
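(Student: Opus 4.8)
The plan is to show that conditioned on survival up to time $t$, with overwhelming probability the BBM consists of a single particle at every time $s \le (1-\widehat\varepsilon)t$ — in particular at time $(1-\widehat\varepsilon)t$ — because producing even one extra particle before that time is too costly. The cost is quantified via Theorem~A: the annealed survival probability decays like $e^{-\beta\alpha t} = e^{-\beta t}$ (recall $p_0=0$ forces $\alpha=1$). The strategy of "stay a single particle up to time $(1-\widehat\varepsilon)t$, then branch freely for the remaining time $\widehat\varepsilon t$" has an annealed survival probability that is at least (single-particle survival cost up to $(1-\widehat\varepsilon)t$) $\times$ (survival cost of a free BBM started fresh for time $\widehat\varepsilon t$). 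The first factor is the probability that a single Brownian particle avoids the Poissonian trap field and does not branch for time $(1-\widehat\varepsilon)t$; by a Sznitman-type / Kac-Donsker-Varadhan argument this decays subexponentially in $t$ (like $\exp[-c\, ((1-\widehat\varepsilon)t)^{d/(d+2)}]$) times $e^{-\beta(1-\widehat\varepsilon)t}$ from the "no branching" constraint, so up to subexponential corrections it is $e^{-\beta(1-\widehat\varepsilon)t}$. The second factor is $\geq e^{-\beta \widehat\varepsilon t(1+o(1))}$ by Theorem~A applied with time $\widehat\varepsilon t$. Hence this lower-bound strategy already achieves the optimal exponential rate $e^{-\beta t(1+o(1))}$, which tells us that the true conditioned process cannot afford to deviate from it on an exponential scale.

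First I would make the lower bound precise: $(\mathbb{E}\times P)(\mathsf{S_t}) \ge \exp[-\beta t(1+o(1))]$, using the decomposition above together with Theorem~A (or more simply its proof in \cite{O2016}) for the "branch freely for time $\widehat\varepsilon t$" piece and a standard Brownian-motion-avoiding-Poissonian-traps estimate for the "single particle, no branching, for time $(1-\widehat\varepsilon)t$" piece. Next, for the upper bound on the complementary event, I would estimate $(\mathbb{E}\times P)(\{|Z((1-\widehat\varepsilon)t)| \ge 2\} \cap \mathsf{S_t})$. The key point is that if $|Z((1-\widehat\varepsilon)t)| \ge 2$, then there was a branching event at some time $s_0 < (1-\widehat\varepsilon)t$, after which at least two independent subtrees must each avoid the trap field up to time $t$. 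I would condition on the trap field and on the genealogical/branching structure up to time $s_0$, and use Lemma~\ref{lemma1} — the general upper bound on the survival probability of a branching system in a trap field — to bound the survival probability of the configuration present at time $s_0$. Because Lemma~\ref{lemma1} is stated for a general (even deterministic) trap field and a branching system with $\ge 1$ initial particles, it should give that a system which has $\ge 2$ particles at time $s_0$ and survives up to time $t$ pays, roughly, an extra factor on the exponential scale: intuitively, two particles branching freely for the last $t - s_0$ units of time in a uniform field would have to clear *two* large balls (or one ball but for *two* independent populations), costing $e^{-2\beta(t-s_0)(1+o(1))}$ rather than $e^{-\beta(t-s_0)(1+o(1))}$, which beats the optimal rate by a factor $e^{-\beta\widehat\varepsilon t(1+o(1))}$ at worst (taking $s_0$ as late as possible, i.e. $s_0 \uparrow (1-\widehat\varepsilon)t$). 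Summing/integrating over $s_0$ and over the (at most exponentially many in $s_0$) possible branch times, the total is still $\le \exp[-\beta t(1 + \delta(\widehat\varepsilon) + o(1))]$ for some $\delta(\widehat\varepsilon) > 0$. Dividing by the lower bound $\exp[-\beta t(1+o(1))]$ from the first step gives $(\mathbb{E}\times P)(|Z((1-\widehat\varepsilon)t)| \ge 2 \mid \mathsf{S_t}) \to 0$, which is the claim.

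The step I expect to be the main obstacle is making the upper-bound accounting rigorous: one must integrate over the first branching time $s_0$, handle the randomly many particles created there and their genealogy, and at each stage invoke Lemma~\ref{lemma1} uniformly — in particular, one needs the estimate from Lemma~\ref{lemma1} to be good enough that the "extra particle penalty" $e^{-\beta(t-s_0)(1+o(1))}$ is not eroded by the subexponential corrections or by the entropy of summing over all possible $s_0$ and branch structures. The clean way to organize this is: (i) fix a small $\delta>0$; (ii) show that on $\mathsf{S_t} \cap \{|Z((1-\widehat\varepsilon)t)| \ge 2\}$ there is a subtree of $Z$ that at time $(1-\widehat\varepsilon)t - \delta t$ (say) already has $\ge 2$ particles and survives to $t$, and bound its contribution by Lemma~\ref{lemma1}, which forces an exponential cost of at least $2\beta(\widehat\varepsilon + \delta)t(1+o(1))$ for that subtree versus $\beta(\widehat\varepsilon+\delta)t$ for the single-particle alternative; (iii) multiply by the (at most $e^{o(t)}$, actually $e^{\beta m (1-\widehat\varepsilon)t}$, but that is still subexponential relative to the gained factor once $\widehat\varepsilon$ is fixed and we are careful — alternatively bound the expected number of branching events) cost of the rest of the tree; and (iv) compare with the lower bound. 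A secondary subtlety is keeping track of the conditioning on $\mathcal{E}^c$, but since $p_0=0$ this is vacuous here, which is exactly why Theorem~\ref{theorem1} is stated under that hypothesis and the $p_0>0$ case is deferred to Theorem~\ref{theorem3}.
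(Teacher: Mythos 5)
There is a genuine gap in your upper-bound accounting, and the paper's proof is structured quite differently precisely to avoid it. Two concrete problems. First, the claimed extra cost $e^{-2\beta(t-s_0)(1+o(1))}$ for two surviving sub-populations is not what Lemma~\ref{lemma1} gives you, and it is not the right annealed rate: the two sub-populations live in the \emph{same} realization of the Poissonian field, so the relevant quantity is $\mathbb{E}[Y^x_t Y^y_t]$, not $\mathbb{E}[Y^x_t]\mathbb{E}[Y^y_t]$. Lemma~\ref{lemma1} only yields that, on the event that the trap field meets a subcritical ball around the starting point, \emph{one} factor $Y$ is at most $e^{-c(t-s_0)+o(t)}$ with $c=\beta\varepsilon(\sqrt{m^2+m}-m)$, so the honest gain over the single-particle rate is $e^{-c(t-s_0)}$ with a \emph{small} $c$, not $e^{-\beta(t-s_0)}$. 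Second, in your step (iii) you propose to multiply by the number of branching structures, $e^{\beta m(1-\widehat\varepsilon)t}$, and call it ``subexponential relative to the gained factor''; it is not subexponential in $t$, and with the corrected gain $e^{-c\widehat\varepsilon t}$ (small $c$) the entropy term $e^{\beta m(1-\widehat\varepsilon)t}$ generically dominates, so the union over first branching times and genealogies destroys the estimate. Your sub-step (ii) is also off: the system can have $\ge 2$ particles at time $(1-\widehat\varepsilon)t$ while still being a single particle at $(1-\widehat\varepsilon)t-\delta t$.

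The paper's proof sidesteps both issues by not integrating over branching times at all. It proceeds in two stages. Stage one conditions on the particle configuration at the single fixed time $(1-\widehat\varepsilon)t$: on the event that no particle has a trap-free subcritical ball around it (which fails only on an SES event because $d\ge 2$), Lemma~\ref{lemma1} applied to each particle independently gives a survival bound $e^{-k\,c\widehat\varepsilon t}$ that scales \emph{linearly} in the particle count $k$, and choosing the threshold $k>\lfloor(\sqrt{2}+1)/\widehat\varepsilon\rfloor$ together with $\sqrt{m^2+m}-m\ge\sqrt{2}-1$ pushes the exponent strictly below $-\beta$. Stage two reduces from the fixed constant $\lfloor(\sqrt{2}+1)/\widehat\varepsilon\rfloor$ to $1$ by a bootstrap: it shows $\mathbb{E}[p_t^2]=o(\mathbb{E}[p_t])$ via Cauchy--Schwarz (to replace $\mathbb{E}[Y^x Y^y]$ by $\mathbb{E}[(Y^0)^2]$) plus one more application of Lemma~\ref{lemma1} on the event that the field meets a subcritical ball, combined with the sharp rate $\mathbb{E}[p_t]=e^{-\beta t+o(t)}$ from Theorem~A; the remaining sum is over a \emph{bounded} range $2\le j\le\lfloor(\sqrt{2}+1)/\widehat\varepsilon\rfloor$, which is harmless. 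Your lower-bound step and the overall heuristic are correct and match the paper's, but without the truncation-to-a-constant stage and the Cauchy--Schwarz bootstrap the upper bound does not close.
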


\begin{remark} Theorem~\ref{theorem1} says that for large $t$, conditioned on survival up to time $t$, with overwhelming probability, the population size at the earlier time $(1-\widehat\varepsilon)t$ is $1$. In other words, with overwhelming probability the population doesn't grow at all up to time $(1-\widehat\varepsilon)t$; branching is completely suppressed. We stress that this is not an almost sure pathwise statement, so there could be realizations where the population grows. 
\end{remark}

The following theorem has the setting of a trap field where the intensity is radially decaying as in \eqref{eq00}:
\begin{equation} \frac{\text{d}\nu}{\text{d}x} \sim \frac{l}{|x|^{d-1}}, \quad
|x|\rightarrow\infty, \quad l>0. \nonumber
\end{equation}
In this case, as emphasized in the introduction, there is a critical intensity $l_{cr}$ at which the switching of regime occurs. The survival strategy of the system depends on whether $l$ is above or below this critical intensity, and in particular for $l>l_{cr}$, the system suppresses the branching until time $\eta^*t$, where $\eta^*$ is one of the minimizers in \eqref{I}. For a definition of and a formula for $l_{cr}$, and for details on $\eta^*$, we refer the reader to \cite[Thm.2]{OCE2017} and its proof. Here, we only note that $0<\eta^*<1$ when $d\geq 2$, and $\eta^*=1$ when $d=1$. Also, it is clear that when $d=1$, the trap intensity in (\ref{eq00}) gives a uniform field as a special case, hence covering the missing case of $d=1$ in Theorem~\ref{theorem1}. 

The next result answers some of the open problems raised in Section 1.3 in \cite{E2003}.

\begin{theorem}[Survival in a radially decaying field; $d\geq 1$] \label{theorem2}
Let $p_0=0$. Suppose that $\text{d}\nu/\text{d}x$ exists, is continuous on $\mathbb{R}^d$, and satisfies (\ref{eq00}). Let $l_{cr}$ be the constant in the critical trap intensity. Then, in the high-intensity regime $l>l_{cr}$, for $d\geq 1$ and $0<\widehat\varepsilon<\eta^*$,
\begin{equation} \underset{t\rightarrow\infty}{\lim}(\mathbb{E}\times P)\left(|Z((\eta^*-\widehat\varepsilon)t)|=1\mid \mathsf{S_t}\right)=1. \nonumber
\end{equation}
\end{theorem}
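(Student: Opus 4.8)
The plan is to show that the complementary event has conditional probability tending to zero. Write $s:=(\eta^*-\widehat\varepsilon)t$. Since $p_0=0$ we have $\alpha=1$, $P(\mathcal E^c)=1$ and $\mathsf{S_t}=\{T>t\}$, so Theorem B gives $\tfrac1t\log(\mathbb E\times P)(\mathsf{S_t})\to -I$ with $I:=I(l,f,\beta,d)$ as in \eqref{I}; hence it suffices to prove
\[
\limsup_{t\to\infty}\frac1t\log(\mathbb E\times P)\big(|Z(s)|\ge 2,\ \mathsf{S_t}\big)<-I .
\]
Since $p_0=p_1=0$, the map $u\mapsto|Z(u)|$ is nondecreasing and increases only at branching times, so on $\{|Z(s)|\ge 2\}$ the first branching time $\sigma$ satisfies $\sigma\le s$. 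I would decompose according to $\sigma$, the position $X$ of the initial particle at time $\sigma$, and the offspring number $k\ge 2$: conditionally on these and on the trap field $K$, the $k$ first-generation particles generate i.i.d.\ BBMs from $X$, each of which must avoid $K$ during $[0,t-\sigma]$ for survival, contributing a factor $q_K(X,t-\sigma)^k$, where $q_K(x,u)$ denotes the probability that a BBM started at $x$ avoids $K$ on $[0,u]$; summing over $k$ against $(p_k)$ leaves at most $f\big(q_K(X,t-\sigma)\big)\le q_K(X,t-\sigma)^2$.

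The crux is to bound the resulting annealed quantity uniformly over $(\sigma,X,k)$, and this is where Lemma~\ref{lemma1} is used: it shows that a BBM which does not suppress its branching cannot, except with super-exponentially small probability, avoid a trap lying within distance $\approx\sqrt{2\beta m}\times(\text{run time})$ of its starting point. Hence, up to super-exponentially small corrections, each sub-BBM must either pay the branching-suppression cost ($\approx e^{-\beta\times(\text{suppressed time})}$) or keep its range inside a trap-free ball (Poisson cost $e^{-\nu}$, with $\nu\big(B(\gamma te,\rho t)\big)\sim l\,t\,g_d(\rho,\gamma)$, $g_d$ as in \eqref{003}) in which to branch freely; this is precisely the trade-off optimized in \eqref{I}. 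Carrying this through, the annealed probability that the first branching occurs at $\sigma=\theta t$, that the initial particle is then at $\approx a t e$, and that two of the sibling sub-BBMs survive on $[0,(1-\theta)t]$, is at most $\exp[-t(\Phi+o(1))]$, where, after optimizing the post-branching strategy (suppress the $k$ particles until time $\psi t$ while moving them to $\approx\gamma t e$, then branch freely with range $\sqrt{2\beta m}(1-\psi)t$) and using $k\ge 2$,
\[
\Phi\ \ge\ \beta\theta+\frac{a^2}{2\theta}+2\Big(\beta(\psi-\theta)+\frac{(\gamma-a)^2}{2(\psi-\theta)}\Big)+l\,g_d\big(\sqrt{2\beta m}(1-\psi),\gamma\big),
\]
over $0\le\theta\le\eta^*-\widehat\varepsilon$, $\theta\le\psi\le 1$, $a,\gamma\ge 0$; the factor $2$ records that the two sibling sub-BBMs can share the cleared ball but each must pay its own displacement and suppression cost.

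It remains to see that $\inf\Phi>I$. From $\frac{(\gamma-a)^2}{\psi-\theta}\ge\frac{(\gamma-a)^2}{2(\psi-\theta)}$ and then $\frac{a^2}{2\theta}+\frac{(\gamma-a)^2}{2(\psi-\theta)}\ge\frac{\gamma^2}{2\psi}$ (convexity of $x\mapsto x^2$) one obtains
\[
\Phi\ \ge\ \Big(\beta\psi+\frac{\gamma^2}{2\psi}+l\,g_d\big(\sqrt{2\beta m}(1-\psi),\gamma\big)\Big)+\beta(\psi-\theta)\ =\ h(\psi,\gamma)+\beta(\psi-\theta),
\]
where $h$ is the function minimized in \eqref{I} (with $\alpha=1$). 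If $\psi\le\eta^*-\widehat\varepsilon/2$, then $h(\psi,\gamma)\ge\min_c h(\psi,c)\ge I+\delta_1$, where $\delta_1:=\min_{\eta\in[0,\,\eta^*-\widehat\varepsilon/2]}\big(\min_c h(\eta,c)-I\big)>0$, the positivity being exactly the strict minimality of $\eta^*$ in \eqref{I} (established in \cite{OCE2017}). If instead $\psi>\eta^*-\widehat\varepsilon/2$, then $\psi-\theta>\widehat\varepsilon/2$ since $\theta\le\eta^*-\widehat\varepsilon$, so the extra term $\beta(\psi-\theta)$ --- present precisely because $k\ge 2$ doubles the suppression rate on $[\theta t,\psi t]$ --- is at least $\beta\widehat\varepsilon/2$. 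In both cases $\Phi\ge I+\delta_0$ with $\delta_0:=\min(\delta_1,\beta\widehat\varepsilon/2)>0$, so $(\mathbb E\times P)(|Z(s)|\ge 2,\mathsf{S_t})\le e^{-t(I+\delta_0+o(1))}=o\big((\mathbb E\times P)(\mathsf{S_t})\big)$ and the conditional probability tends to $0$. For $d=1$ one has $\eta^*=1$, so the assertion is complete suppression of branching, obtained by the same scheme with the simpler uniform-field clearing estimates (the radially decaying field in $d=1$ specializing to a uniform one), exactly as in Theorem~\ref{theorem1}.

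The main obstacle is the estimate of the second step, uniform over realizations of the trap field: converting ``the post-branching BBMs avoid $K$'' into the single variational cost $\Phi$, for which Lemma~\ref{lemma1} is the essential tool. This must be combined with the justification that, up to subexponential factors, the optimal post-branching behaviour is to suppress, then move, then branch freely in a cleared ball, and with the bookkeeping of lower-order corrections (the thin tube joining the origin to the cleared ball, Brownian confinement costs, and the integrable singularity of $\text{d}\nu/\text{d}x$ at the origin when $\gamma=0$). All of this parallels, and refines, the analysis behind Theorem B.
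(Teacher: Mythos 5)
Your route differs substantially from the paper's, and the core of your argument—the variational inequality $\inf\Phi>I$—is both correct and conceptually the same inequality that the paper needs (compare its display (6), which is the $\theta=\psi=\eta^*-\widehat\varepsilon$ slice of your bound). In fact your decomposition at the first branching time $\sigma$ is attractive because the crude bound $f(q_K)\le q_K^2$ works for every offspring number $k\ge 2$ at once, so you skip the paper's intermediate step of first reducing the population to $\lfloor t\rfloor$ and only then bootstrapping down to $1$. The case analysis ($\psi\le\eta^*-\widehat\varepsilon/2$ gives $h(\psi,\gamma)\ge I+\delta_1$ by strict minimality; otherwise $\psi-\theta>\widehat\varepsilon/2$ and the extra suppression term $\beta(\psi-\theta)$ saves you) is sound and neatly isolates where the factor $k\ge 2$ actually bites.

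The genuine gap is in the step you yourself flag as the main obstacle: passing from ``the annealed probability that the first branching is at $\theta t$, the particle is at $\approx a t e$, and two sibling sub-BBMs survive on $[0,(1-\theta)t]$'' to the exponential bound $\exp[-t\Phi+o(t)]$. After averaging over the Poisson field the $k$ sub-BBMs are no longer independent, so the quantity you face is a correlated second moment of the form $\mathbb{E}\big[q_K(X,u)^2\big]$, not $\mathbb{E}[q_K(X,u)]^2$. Your plan to control this is essentially to rerun the entire upper-bound machinery of Theorem B, now with an inhomogeneous initial position $X\approx at e$ and with the extra parameters $(\theta,\psi,\gamma)$ and a factor $k$; this is a full second-moment version of Theorem B, and it is precisely what is missing from the proposal. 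The paper avoids that rederivation entirely with a short Lemma~\ref{lemma1} dichotomy: writing $Y^2 = Y\cdot Y$ and observing that on the event $\widehat\Omega_t$ (a trap in a subcritical ball around $\widehat X_t$) Lemma~\ref{lemma1} forces $Y\le e^{-ct}$ almost surely, one gets $\mathbb{E}\big[Y^2\big]\le e^{-ct}\mathbb{E}[Y]+\mathbb{P}(\widehat\Omega_t^c)$, reducing the second-moment estimate to the already-known first-moment estimate plus a Poissonian clearing cost for a displaced ball. That single step collapses all the bookkeeping (the ``thin tube,'' the confinement cost, the singularity at the origin) that you correctly list as lower-order concerns, and the only variational content left is exactly your $\inf\Phi>I$. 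If you adopt that dichotomy you can keep your cleaner first-branching-time framing and dispense with the hard part of step (ii).

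Two smaller points. First, before invoking the paper's ``reduce to $\lfloor t\rfloor$'' step or your second-moment argument, one still needs the two-term comparison ratio \eqref{same.cost.roughly}-type estimate (or its $\lfloor t\rfloor$ variant, \eqref{equ82}) to convert the conditional-probability statement into a statement about annealed survival probabilities; your reduction from ``$|Z(s)|\ge 2$'' rather than ``$|Z(s)|=k$'' quietly sidesteps this, which is fine, but it should be said. Second, for $d=1$ you assert the argument ``specializes to the uniform case''; the paper does in fact give a separate, slightly different computation here (the explicit $l_{cr}=\tfrac12\sqrt{\beta/(2m)}$ comparison) and does not simply cite Theorem~\ref{theorem1}, because $\eta^*=1$ makes the displacement term degenerate; your sketch would need that adaptation too.
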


\begin{remark}
Optimal survival strategies about the population size arise when the branching is suppressed for at least part of the time interval in question in order to realize the event of survival. Therefore, the strategy in the theorem above applies only when $l>l_{cr}$, where the branching is suppressed in the time interval $[0,\eta^* t]$. When $l<l_{cr}$, the system undergoes `free' branching.
\end{remark}

\section{A trap in a subcritical ball}\label{section3}

In this section, we state and prove the central lemma of this work, on which the main results are built. The following lemma is of independent interest, because it applies to a much more general class of trap fields (random or deterministic) on $\mathbb{R}^d$ as opposed to only Poissonian fields.  

\begin{lemma}[Survival among traps in a subcritical ball]\label{lemma1}
Let $p_0=0=p_1$. Let $0<\varepsilon<1$ and define $\rho_t=\sqrt{2\beta m}(1-\varepsilon)t$. Suppose that  {\rm supp}$(\Pi)\cap \bar{B}(0,\rho_t)\neq \emptyset$. Then, the probability that the BBM avoids the trap field up to time $t$ satisfies the following asymptotics:
\begin{equation}\underset{t\rightarrow\infty}{\limsup}\,\frac{1}{t}\log P\left(\mathsf{S_t}\right)\leq-\beta\varepsilon\left(\sqrt{m^2+m}-m\right). \nonumber
\end{equation}
\end{lemma}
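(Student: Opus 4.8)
The plan is to exploit the single trap that is guaranteed to lie inside the ball $\bar B(0,\rho_t)$: survival up to time $t$ forces the BBM to avoid this one ball, and since $\rho_t = \sqrt{2\beta m}(1-\varepsilon)t$ sits strictly below the natural "speed" $\sqrt{2\beta m}\,t$ at which the rightmost (or, here, the extremal) particle of a free BBM travels, avoiding a ball at distance $\approx \rho_t$ is a rare event whose cost I can quantify. First I would fix a trap point $x_0 \in \mathrm{supp}(\Pi)\cap\bar B(0,\rho_t)$ and note that $P(\mathsf{S}_t)\le P(\text{BBM avoids }\bar B(x_0,r)\text{ up to }t)$, so it suffices to bound the probability of avoiding a single ball centered within distance $\rho_t$ of the origin. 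By a union bound over the (random but, on the relevant event, controllably sized) particle population, this reduces to a statement about a single Brownian path together with the Malthusian growth $e^{\beta m t}$ of the expected population: the expected number of particles that have entered $\bar B(x_0,r)$ by time $t$ is comparable to $e^{\beta m t}\,P_{x}^{\mathrm{BM}}(\text{hit }\bar B(x_0,r)\text{ by }t)$, and I want to show the complementary event (nobody hits) is exponentially costly.

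The key computation is a large-deviation estimate for Brownian motion: the probability that a single Brownian particle started at the origin stays within distance $\|x_0\|-r \le \rho_t$ of never coming within $r$ of $x_0$ — equivalently, the probability that branching-Brownian motion keeps \emph{all} its particles out of $\bar B(x_0,r)$. Here I would use the classical fact (Biggins-type speed results, as invoked in the paper's introduction) that $\lim_{t\to\infty} \frac1t \log E[\#\{\text{particles outside } \bar B(0, c t) \text{ at time } t\}]$ behaves like $\beta m - c^2/2$ for $c<\sqrt{2\beta m}$, and more precisely that the probability the \emph{whole} system stays inside $\bar B(0,ct)$ decays like $\exp[-\beta t(\sqrt{m^2+m}-m) + o(t)]$ when... — wait, let me phrase this correctly: I want the cost of the system \emph{failing to reach} a given point at distance $\rho_t < \sqrt{2\beta m}\,t$. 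Since $\rho_t/t = \sqrt{2\beta m}(1-\varepsilon) < \sqrt{2\beta m}$, reaching it is typical, so \emph{not} reaching it is exponentially unlikely. Equivalently, survival forces the system into a ball of radius $\rho_t$ (to avoid $\bar B(x_0,r)$ it must avoid a neighborhood), and I compute the exponential rate at which a BBM is confined to a sub-speed ball. Optimizing the trade-off between suppressing growth and the Brownian confinement cost yields the rate function $\beta\varepsilon(\sqrt{m^2+m}-m)$: writing the cost of producing $e^{\beta m s}$ particles and then confining them, one minimizes over the confinement radius/speed, and the minimizer produces exactly the quantity $\sqrt{m^2+m}-m$ scaled by $\beta\varepsilon$.

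Concretely, the steps in order: (i) reduce $P(\mathsf{S}_t)$ to $P(\text{BBM stays in the complement of }\bar B(x_0,r),\ \|x_0\|\le \rho_t)$; (ii) observe that staying out of $\bar B(x_0,r)$ up to time $t$ entails, for each surviving line of descent, a Brownian motion that from a starting point of modulus $\le \rho_t$ fails to reach $x_0$; (iii) apply the first-moment (many-to-one) formula to express the expected number of lines of descent that \emph{do} hit $\bar B(x_0,r)$ as $e^{\beta m t}$ times a Brownian hitting probability, then run the usual argument: either the population is small (subexponential) — contributing the "suppression" cost — or it is large, in which case by first moment plus Markov's inequality someone hits the trap with probability $\to 1$; (iv) optimize the resulting variational expression $\inf_{\eta\in[0,1]}\{\beta m \eta + (\text{Brownian confinement rate over remaining time } (1-\eta)\varepsilon t)\}$, or more directly recognize that the relevant Brownian estimate $P_0^{\mathrm{BM}}(\sup_{s\le t}|B_s| \le ct) \approx \exp[-(\text{rate})\,t]$ combined with the growth factor yields, after calculus, the clean rate $\beta\varepsilon(\sqrt{m^2+m}-m)$. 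The main obstacle is step (iii)/(iv): making the first-moment/second-moment dichotomy airtight \emph{uniformly} in the location of $x_0$ within $\bar B(0,\rho_t)$ (the bound must not depend on which trap point we picked), and carefully matching the Brownian large-deviation constant with the Malthusian exponent so that the optimization delivers precisely $\sqrt{m^2+m}-m$ rather than a suboptimal constant. I would handle the uniformity by noting the estimate only gets worse as $\|x_0\|$ decreases toward $0$ — the closer the trap, the easier to hit — so the worst case is $\|x_0\|=\rho_t$, which is exactly the configuration the bound is tuned for.
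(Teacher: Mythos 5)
Your broad plan (exploit the one guaranteed trap in the subcritical ball, split the cost into a "population suppression" piece and a "spatial confinement" piece, and optimize the trade-off) is the same high-level picture as the paper's proof, but step (iii) contains a genuine error. You propose to conclude from the many-to-one formula that if the expected number of trap-hitting lines of descent is large, then "by first moment plus Markov's inequality someone hits the trap with probability $\to 1$." Markov's inequality gives an \emph{upper} bound $P(N\ge 1)\le E[N]$ on the probability that at least one particle hits; it gives no lower bound, so a large expectation alone does not imply that the avoidance probability is small. To convert "large expected number of hits" into "avoidance is exponentially unlikely" you need either a second-moment (Paley--Zygmund) argument or an independence argument, and as written your plan has neither.

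The paper's proof supplies exactly the missing ingredient by introducing an intermediate time $\delta t$: it conditions on the event $B_t\cap C_t$ that at time $\delta t$ there are at least $\lfloor e^{\beta\delta' t}\rfloor$ particles, all contained in a ball of radius $(\delta+\delta'')\sqrt{2\beta m}\,t$. Given this, each of those particles launches an \emph{independent} sub-BBM, each of which has probability bounded below by a polynomial in $1/t$ (a surface-area argument together with \cite[Prop.5]{E2004}, once the parameters satisfy $2\delta+\delta''<\varepsilon$) of hitting the trap. By independence, the probability that every one of the $\approx e^{\beta\delta' t}$ sub-BBMs simultaneously avoids the trap is superexponentially small in $t$. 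The exponential cost then comes only from the complementary events $B_t^c$ and $C_t^c$, whose rates are $\beta(\delta-\delta')$ and $\beta m\delta(\delta''^2/\delta^2 + 2\delta''/\delta)$ respectively; the lemma's constant arises from maximizing $\min\bigl\{\delta,\ m\delta(\delta''^2/\delta^2 + 2\delta''/\delta)\bigr\}$ over $\delta,\delta''>0$ subject to $2\delta+\delta''<\varepsilon$, which yields $\varepsilon(\sqrt{m^2+m}-m)$. Your proposed one-parameter optimization $\inf_{\eta}\{\beta m\eta + \cdots\}$ doesn't match this structure (it's a minimum of two competing rates over two free parameters, not a sum), so without the intermediate-time conditioning and the independence argument your sketch would not actually produce the stated constant, nor would it close the logical gap between a first-moment bound and the required high-probability hitting statement.

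Your uniformity remark at the end is fine (the estimate indeed degrades as $\|x_0\|\to\rho_t$, so that's the worst case), and the paper makes the corresponding simplification by taking a single trap point on $\partial\bar B(0,\rho_t)$.
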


\begin{remark} We call $B(0,\rho_t)$, where $\rho_t=\rho_t(\varepsilon)$, a \textit{subcritical ball} since the `speed' of a BBM is equal to $\sqrt{2\beta m}$, and for any $0<\varepsilon<1$, a BBM that starts with a single particle at the origin will escape this ball with a probability tending to one as $t\rightarrow\infty$ (see \cite{MK1975}).
\end{remark}

\begin{proof}
The strategy is to divide the time interval $[0,t]$ into two pieces: $[0,\delta t]$ and $[\delta t,t]$, and then to condition on the number of particles and the radius of the range at time $\delta t$. Here, $0<\delta<1$ is a number, which will later depend on $\varepsilon$. 

Let $A_t$ be the event that $Z$ avoids the trap field in the time interval $[0,t]$, and let $p(t):=P(A_t)$. For an upper bound on $p(t)$, we may suppose\footnote{By Brownian scaling, changing the distance of the trap is equivalent to speeding up or slowing down  time.} that $\bar{B}(0,\rho_t)$ contains precisely 1 point from $\text{supp}(\Pi)$, which is on the boundary of $\bar{B}(0,\rho_t)$. 

For $0<\delta'<\delta$ and $\delta''>0$, let $B_t$ be the event that at least $\lfloor e^{\beta \delta' t} \rfloor$ particles are produced in the time interval $[0,\delta t]$ and $C_t$ be the event that the BBM remains inside $B(0,(\delta+\delta'')t\sqrt{2\beta m})$ throughout $[0,\delta t]$. Use the estimate \[P(A)\leq P(A \mid B\cap C)+P(B^c)+P(C^c)\] to obtain
\begin{equation} 
p(t)\leq  P(A_t \mid \! B_t \cap C_t) + P(B_t^c) + P(C_t^c). \label{eq2}
\end{equation}

Let $p_3(t):=P(B_t^c)$ and $N(t):=|Z(t)|$. From \cite{KT1975}, for strictly dyadic branching (denote this process by $\tilde{N}$), we have
\begin{equation} P(\tilde{N}(t)> k)=(1-e^{-\beta t})^k \quad \text{for} \quad k=0,1,2,\ldots \label{eq3}
\end{equation}
Then, using the binomial theorem, we have 
\begin{equation} P\left(\tilde{N}\left(\delta t\right)\leq k\right)=1-\left(1-e^{-\beta \delta t}\right)^k\leq k e^{-\beta \delta t}  \quad \text{for} \quad k=0,1,2,\ldots \label{eq4}
\end{equation}
Setting $k=\lfloor e^{\delta'\beta t} \rfloor$, and comparing a BBM having $p_0=p_1=0$ (which holds by hypothesis) with a strictly dyadic BBM, we have for all $t>0$,
\begin{equation} p_3(t)\leq \exp[-(\delta-\delta')\beta t+o(t)]. \label{eq5}
\end{equation}

Let $p_4(t):=P(C_t^c)$. Define $M(t):=\inf\left\{r\geq 0:R(t)\subseteq B(0,r)\right\}$ to be the radius of the minimal ball containing the range of the BBM up to time $t$. Observe that
\begin{equation} p_4(t)=P\left(M(\delta t)>\sqrt{2\beta m}\left(1+\frac{\delta''}{\delta}\right)\delta t\right). \label{eq51}
\end{equation}
We now find an upper bound for $p_4(t)$. Let $N_t$ denote the set of particles that are alive at $t$ and for $1\leq u \leq |N_t|$, $X_u(t)$ denote the position of particle $u$ at time $t$. Then, using the union bound, for $\gamma>0$,
\begin{align} P\left(M(t)>\gamma t\right)
& = P\left(\exists u\in N_t\ :\ \sup_{0\le s\le t}|X_u(s)|>\gamma t\right) \nonumber \\
& \le E[N(t)]\:\mathbf{P}_0\left(\sup_{0\le s\le t}|B(s)|>\gamma t\right), \label{eq52}
\end{align}
where $B=(B(t))_{t\geq 0}$ represents standard Brownian motion starting at the origin, with probability $\mathbf P_0$. It is a standard result that $E[N(t)]=\exp(\beta m t)$ (see for example \cite[Sect.8.11]{KT1975}). Moreover, we know from \cite[Lemma 5]{OCE2017} that $\mathbf{P}_0\left(\sup_{0\le s\le t}|B(s)|>\gamma t\right)=\exp[-\gamma^2 t/2+o(t)]$. Then, by choosing $\gamma=\sqrt{2\beta m}\left(1+\frac{\delta''}{\delta}\right)$ and replacing $t$ by $\delta t$ in (\ref{eq52}), it follows from (\ref{eq51}) and (\ref{eq52}) that
\begin{equation} p_4(t)\leq \exp\left[-\beta m \delta t\left(\frac{\delta''^2}{\delta^2}+2\frac{\delta''}{\delta}\right)+o(t)\right]. \label{eq7}
\end{equation}

Now let $p_2(t):=P(A_t \mid B_t \cap C_t)$. Note that conditioned on the event $B_t \cap C_t$, there are at least $\lfloor e^{\beta \delta' t} \rfloor$ particles within the ball $B(0,(\delta+\delta'')t\sqrt{2\beta m})$ at time $\delta t$, each of which is \textbf{at most} at a distance 
\[ \sqrt{2\beta m}t(1-\varepsilon+\delta+\delta'')=:r(t) \] 
away from the trap point. (Recall that $\rho_t=\sqrt{2\beta m}(1-\varepsilon)t$.) Focus on one such particle. The probability that the sub-BBM emerging from this particle avoids the trap in the remaining time $t(1-\delta)$ is at most the sum of the probability that it remains in its $r(t)$-ball (call this $p_5(t)$) and the probability that it avoids the trap given that it escapes its $r(t)$-ball (call this $p_6(t)$). By the $r(t)$-ball, we mean the ball with radius $r(t)$ that is centered at the position of the particle at time $\delta t$. Hence, by the Markov property and independence of particles, we have
\begin{equation} p_2(t)\leq \left[p_5(t)+p_6(t)\right]^{\lfloor e^{\delta'\beta t} \rfloor}. \label{eq8}
\end{equation}

Consider $p_5(t)$. \textit{Choose $\delta>0$ and $\delta''>0$} so that $1-\varepsilon+\delta+\delta''<1-\delta$, which is equivalent to
\begin{equation}   2\delta+\delta''<\varepsilon. \label{eq81}
\end{equation}
Then, \cite[Prop.5]{E2004} implies\footnote{In \cite{E2004} the branching was strictly dyadic but the proof can be adapted easily to our more general setting.} that there exists a constant $c=c(\varepsilon,\beta,m)>0$ such that  
\begin{equation} p_5(t)\leq e^{-ct} \quad \text{for all large $t$.} \label{eq9}
\end{equation}
Now consider $p_6(t)$. By spherical symmetry, using the standard argument of proportion of surface areas, we have 
\begin{equation} p_6(t)\leq\left(1-\frac{\gamma_{a,d}}{r(t)^{d-1}}\right) \quad \text{for all $t>0$}, \label{eq10}
\end{equation} 
where $\gamma_{a,d}$ is a constant that depends on the dimension $d$ and the trap radius $a$.

From (\ref{eq8})-(\ref{eq10}), it is clear that there exists a constant $c=c(\varepsilon,a,d,\beta,m)>0$ such that for all large $t$, we have
\begin{equation} p_2(t)\leq \left(1-\frac{\gamma_{a,d}/2}{r(t)^{d-1}}\right)^{\lfloor e^{\delta'\beta t} \rfloor}\leq [\exp(-ct)]^{\lfloor e^{\delta'\beta t} \rfloor /t}=\exp(-c\lfloor e^{\delta'\beta t} \rfloor ) =\mathrm{SES}, \label{eq11}
\end{equation} 
where the estimate $(1-x/s)^s\leq e^{-x}$ is used in the second inequality and `SES' means `superexponentially small' in $t$. Also, note that the factor $1/2$ in the numerator in the second expression makes up for $p_5(t)$.

Now, putting everything together, from (\ref{eq2}), (\ref{eq5}), (\ref{eq7}) and (\ref{eq11}), we have  
\begin{equation} p(t)\leq \exp[-(\delta-\delta')\beta t+o(t)]+\exp\left[-\beta m \delta t\left(\frac{\delta''^2}{\delta^2}+2\frac{\delta''}{\delta}\right)+o(t)\right]+\text{SES}, \nonumber
\end{equation}
subject to the constraint $2\delta+\delta''<\varepsilon$, where the SES term comes from (\ref{eq11}). First, let $\delta'\rightarrow 0$ to obtain
\begin{equation} \underset{t\rightarrow\infty}{\limsup}\,\frac{1}{t}\log p(t)\leq -\beta\delta\min\left\{1,m\left(\frac{\delta''^2}{\delta^2}+2\frac{\delta''}{\delta}\right)\right\}. \label{eq121}
\end{equation}
Next, find the sharpest bound on $p(t)$ by optimizing over the parameters $\delta$ and $\delta''$, respecting the condition (\ref{eq81}). 


It is clear from (\ref{eq121}) that we need to maximize 
\begin{equation} f(\delta,\delta''):=\min\left\{\delta,m \delta \left(\frac{\delta''^2}{\delta^2}+2\frac{\delta''}{\delta}\right)\right\} \quad \text{subject to} \quad 2\delta+\delta''<\varepsilon. \nonumber 
\end{equation} 
Let $\delta''=\delta/k$, $k>0$ so that $f(\delta,\delta'')=f(\delta,k)=\min\left\{\delta,m \delta \left(\frac{1}{k^2}+\frac{2}{k}\right)\right\}$, and the constraint becomes $\delta<\varepsilon k/(2k+1)$. In order to maximize $f$, we solve 
\begin{equation}
1=m\left(\frac{1}{k^2}+\frac{2}{k}\right) \nonumber
\end{equation}
for positive $k$. This gives $k=m+\sqrt{m^2+m}$ as the optimal value for $k$, and the constraint becomes $\delta<\varepsilon(\sqrt{m^2+m}-m)$. By letting $\delta\rightarrow\varepsilon(\sqrt{m^2+m}-m)$, it follows from (\ref{eq121}) that
\begin{equation} \underset{t\rightarrow\infty}{\limsup}\,\frac{1}{t}\log p(t)\leq -\beta \varepsilon(\sqrt{m^2+m}-m). \nonumber
\end{equation}   
Indeed, by choosing $k$ differently, one obtains a weaker bound for $p(t)$. If $k>m+\sqrt{m^2+m}$, then $1>m\left(\frac{1}{k^2}+\frac{2}{k}\right)$ so that $f(\delta,k)=m \delta \left(\frac{1}{k^2}+\frac{2}{k}\right)$. In view of $\delta<\varepsilon k/(2k+1)$, we then have $p(t)\leq \exp[-\beta t \varepsilon \frac{m}{k}+o(t)]$, where $m/k<m/(m+\sqrt{m^2+m})$. Similarly, if $k<m+\sqrt{m^2+m}$, then $1<m\left(\frac{1}{k^2}+\frac{2}{k}\right)$ so that $f(\delta,k)=\delta$, and we have $p(t)\leq \exp[-\beta t \varepsilon \frac{k}{2k+1}+o(t)]$, where $k/(2k+1)<m/(m+\sqrt{m^2+m})$.  
\end{proof}

\begin{remark} Intuitively, what we are using in the proof of Lemma~\ref{lemma1} is that there are exponentially many particles at the frontier of a BBM instead of just 1 particle. In our proof, this appears as the factor $\lfloor e^{\beta \delta' t} \rfloor$ in (\ref{eq11}). Even though the BBM on average has $e^{\beta m t}$ particles at time $t$, and the ones on the frontier (meaning the ones that have escaped out of $B(0,\sqrt{2\beta m}t(1-\varepsilon))$) are not ``too many", they are not ``too few" either, there are still exponentially many ($\lfloor e^{\beta \delta' t} \rfloor$) on the frontier.
\end{remark}

\begin{remark} Lemma~\ref{lemma1} enables us to easily conclude the following: Let $0<\varepsilon<1$ and $\rho_t:=\sqrt{2\beta m}(1-\varepsilon)t$. Let $\Pi$ denote any Poisson random measure on $\mathbb{R}^d$ with mean measure $\nu$ such that the probability that $B(0,\rho_t)$ is trap-free is exponentially small in $t$. (For example, any $\nu$ that yields $\mathbb{P}(B(0,r)\:\:\text{is trap-free})\leq e^{-cr}$ for all $r>0$ for some $c>0$.) Then, for all large $t$, the annealed probability that the system avoids the trap field up to time $t$ is at most exponentially small in $t$, that is,
\[ (\mathbb{E}\times P)(\mathsf{S_t})\leq e^{-kt} \]
for some constant $k>0$ that possibly depends on $\varepsilon$, $\beta$, $m$ and $\nu$. Indeed, one easily obtains this result by conditioning on the event that $B(0,\rho_t)$ is trap-free, and applying Lemma~\ref{lemma1} on its complement. 
\end{remark}

\section{Proof of main theorems}\label{section4}

We now give the proof of the main theorems: Theorem~\ref{theorem1} and Theorem~\ref{theorem2}. The central ingredient in both proofs is Lemma~\ref{lemma1}. We give a bootstrap argument in each proof. Namely, in the proof of Theorem~\ref{theorem1}, we first show that for a given $\widehat\varepsilon>0$, with overwhelming probability, there is at most $k(\widehat\varepsilon)$ particles present at time $(1-\widehat\varepsilon)t$, where $k$ doesn't depend on time. Then, using this, we show that there is actually just one particle present. In the proof of Theorem~\ref{theorem2}, the same strategy is followed with the only difference being the replacement of $k(\widehat\varepsilon)$ by $\lfloor t \rfloor$. 

Next, we summarize the argument we will use, before turning to the proof of Theorem~\ref{theorem1}.

\vspace{5mm}

\noindent \textbf{A general `bootstrap argument'}

Let $\{A_t\}_{t>0}$ and $\{B_t\}_{t>0}$ be two families of events.
We are going to apply the  general argument that, since
$$\frac{P(B_t\mid \mathsf{S_t})}{P(A_t\mid \mathsf{S_t})}=\frac{P(B_t)}{P(A_t)}\frac{P( \mathsf{S_t}\mid B_t)}{P( \mathsf{S_t}\mid A_t)},$$
it follows that if $$\lim_{t\to\infty}\frac{P( \mathsf{S_t}\mid B_t)}{P( \mathsf{S_t}\mid A_t)}=0,$$ $\frac{P(B_t)}{P(A_t)}$ remains bounded from above and $\lim_{t\to\infty}P(A_t\mid \mathsf{S_t})=1$, then $\lim_{t\to\infty}P(B_t\mid \mathsf{S_t})=0$.
In other words, in this situation, if we know that $\{A_t\}_{t>0}$ is an optimal strategy for $\mathsf{S_t}$, then so is $\{A_t\cap B_t^c\}_{t>0}$.

This enables a `bootstrap' argument, namely, one first checks that $\{A_t\}_{t>0}$ is an optimal strategy, and then strengthens the argument by replacing $\{A_t\}_{t>0}$ with $\{A_t\cap B_t^c\}_{t>0}$.

\vspace{5mm}

\noindent \textbf{Proof of Theorem~\ref{theorem1}}

Fix $0<\widehat\varepsilon<1$.  Let $K_t:=\left\{|Z((1-\widehat\varepsilon)t)|\leq \lfloor (\sqrt{2}+1)/\widehat\varepsilon \rfloor \right\}$.
We first show that 
\begin{equation} \underset{t\rightarrow\infty}{\lim}(K_t \mid \mathsf{S_t})=1. \label{equ1}
\end{equation}
 By \eqref{001}, noting that $\alpha=1$ when $p_0=0$, it is enough to verify that 
\begin{equation}\label{eq14}
\underset{t\rightarrow\infty}{\limsup}\,\frac{1}{t}\log(\mathbb{E}\times P)(K_t^c\cap\mathsf{S_t})<-\beta. 
\end{equation}
Split the time interval $[0,t]$ into two pieces: $[0,(1-\widehat\varepsilon)t]$ and $[(1-\widehat\varepsilon)t,t]$. For $0<\varepsilon<1$, let $\widehat\rho_t=(1-\varepsilon)\sqrt{2\beta m}\widehat\varepsilon t$. Define $A_t$ to be the event that among the particles alive at time $(1-\widehat\varepsilon)t$, there is at least $1$ such that the ball $B(0,\widehat\rho_t)$ around it is trap-free. Estimate
\begin{equation} (\mathbb{E}\times P)(K_t^c\cap\mathsf{S_t})\leq (\mathbb{E}\times P)(A_t)+(\mathbb{E}\times P)(K_t^c\cap\mathsf{S_t}\mid A_t^c). \label{eq140}
\end{equation} 
By the definition of the Poisson random measure, and using the union bound, having a uniform intensity yields that for $t>0$, $(\mathbb{E}\times P)(A_t)\leq u(t)\exp[-ct^d]$, where 
$$u(t):=E\left|Z((1-\widehat\varepsilon)t)\right|=\exp\left[\beta m(1-\widehat\varepsilon)t\right],$$
and $c>0$ is some constant. Since $d\geq 2$ by assumption, it follows that the first term on the right-hand side of (\ref{eq140}) is SES. By Lemma~\ref{lemma1}, the second term on the right-hand side of (\ref{eq140}) is at most 
\begin{equation}
\exp\left[-\left(\lfloor (\sqrt{2}+1)/\widehat\varepsilon \rfloor+1\right)\beta\varepsilon(\sqrt{m^2+m}-m)\widehat\varepsilon t+o(t)\right]. \nonumber
\end{equation}
Since $m\geq 1$, observe that $\sqrt{2}-1\leq\sqrt{m^2+m}-m$. Finally, use (\ref{eq140}) and let $\varepsilon\rightarrow 1$ to obtain (\ref{eq14}), which completes the proof of (\ref{equ1}). (Note that the $\widehat\varepsilon$ appearing in the statement of the theorem is different from the $\varepsilon$ appearing in the definition of $\widehat\rho_t$.) 

\smallskip

Next, using (\ref{equ1}), we reduce the number of particles to $1$. Namely, we show that
\begin{equation} \underset{t\rightarrow\infty}{\lim}(\mathbb{E}\times P)\left(|Z((1-\widehat\varepsilon)t)|=1\mid \mathsf{S_t}\right)=1. \nonumber
\end{equation}

This is done by using the `bootstrap argument' (explained at the beginning of this section) with $A_t:=\left\{|Z((1-\widehat\varepsilon)t)|\le \lfloor (\sqrt{2}+1)/\widehat\varepsilon \rfloor\right\},\ B_t:=\left\{|Z((1-\widehat\varepsilon)t)|=k\right\}$, and with some $2\le k\le \lfloor (\sqrt{2}+1)/\widehat\varepsilon \rfloor$ fixed. We first note that 
\begin{equation}\label{same.cost.roughly}
0<\liminf_{t\to\infty} \frac{P(B_t)}{P(A_t)}\le \limsup_{t\to\infty} \frac{P(B_t)}{P(A_t)}<\infty,\ k\ge 1.
\end{equation}
This is clearly true for a Yule process (corresponding to $p_2=1$), as we have the explicit formula for the distribution of $|Z(t)|$ as $P(|Z(t)|=k)=e^{-\beta t}[1-e^{-\beta t}]^{k-1}$ for $t>0$ and $k\geq 1$. For a general supercritical process with $p_0=p_1=0$, \eqref{same.cost.roughly} follows by comparison with a Yule process as in the proof of \cite[Lemma 6]{OCE2017}, where the term $\lfloor t^{d+\varepsilon}  \rfloor$ therein should be replaced by $k$. 

On the other hand, surviving with even $2$ particles has an annealed probability which is lower order than surviving with one particle.
To see why this is true, suppose for simplicity that the remaining time is $t$ instead of $\widehat\varepsilon t$ (the argument is the same for $\widehat\varepsilon t$). Let $Y^x_t=Y^x_t(\omega)$ be the probability of survival up to $t$ for a BBM that starts with a single particle at $x$. Here, $\omega$ represents a realization of the trap field. By the Cauchy-Schwarz inequality and spatial homogeneity,  
\begin{equation} \mathbb{E}\left[Y^x_t Y^y_t\right]\leq \sqrt{\mathbb{E}\left[(Y^x_t)^2\right]\mathbb{E}\left[(Y^y_t)^2\right]} = \mathbb{E}[(Y^0_t)^2]. \label{equ}
\end{equation}
Now let $p_t=Y^0_t$. Thus, to prove that the probability of surviving with $2$ particles is of lower order than the probability of surviving with just $1$, it is enough to show that $\mathbb{E}[\: p_t^2]=o(\mathbb{E}[\: p_t])$ as $t\rightarrow \infty$. To this end, let $\varepsilon>0$ and $\rho_t:=(1-\varepsilon)\sqrt{2\beta m}\,t$ be a subcritical radius. Denote 
$$\Omega_t:=\{\omega\in\Omega\mid \text{supp}(\Pi(\omega))\cap \bar{B}(0,\rho_t)\neq \emptyset\}.$$
Then for all large $t$,
\begin{align}\label{with.indicators} \mathbb{E}[\: p_t^2]\leq & \:\mathbb{E}[\: p_t^2 \mathbbm{1}_{\Omega_t}]  + \mathbb{P}(\Omega_t^c) \leq  \: e^{-ct}  \:\mathbb{E}[\: p_t \mathbbm{1}_{\Omega_t}]+e^{-ct^d} \leq  \: e^{-ct}\mathbb{E}[\: p_t]+e^{-ct^d}, 
\end{align}
where all constants are denoted generically, as before, by $c$, and Lemma~\ref{lemma1} is used in passing to the second inequality: write $p_t^2=p_t p_t$ and on $\Omega_t$, bound the second $p_t$ from above by $e^{-ct}$, according to Lemma~\ref{lemma1}. Now, since $d\geq 2$, we have for all large $t$,
\begin{equation} \frac{\mathbb{E}[\: p_t^2]}{\mathbb{E}[\: p_t]}\leq e^{-ct}+\frac{e^{-ct^d}}{\mathbb{E}[\: p_t]}\rightarrow 0 \:\: \text{as} \:\: t\rightarrow \infty, \label{equ3}
\end{equation}
since we know from \eqref{001} that for a BBM with $p_0=p_1=0$ in $d\geq 2$, we have $\mathbb{E}[\: p_t]=\exp[-\beta t+o(t)]$. This shows that the probability of surviving with $2$ particles is of lower order in $t$ than that of surviving with just $1$.

\smallskip

To finish the proof, in view of (\ref{equ1}), we show that the probability of surviving with at most $\lfloor (\sqrt{2}+1)/\widehat\varepsilon \rfloor$ particles is of lower order than surviving with just $1$. It follows from (\ref{equ3}) that the convergence of the ratio $\frac{\mathbb{E}[\: p_t^2]}{\mathbb{E}[\: p_t]}$ to $0$ is at least exponentially fast in $t$. Hence, we conclude that for some constant $k>0$ and for all large $t$, we have
\begin{equation} \frac{\mathbb{E}[\: p_t^2]}{\mathbb{E}[\: p_t]}\leq e^{-kt}, \nonumber
\end{equation} 
and, {\it a fortiori},
\[ \frac{\mathbb{E}[\: p_t^j]}{\mathbb{E}[\: p_t]}\leq e^{-kt} \quad \text{for every} \quad 2\leq j\leq \lfloor (\sqrt{2}+1)/\widehat\varepsilon \rfloor \quad \text{for all large $t$},\]
which implies that 
\[\frac{\sum_{j=2}^{\lfloor (\sqrt{2}+1)/\widehat\varepsilon \rfloor} \mathbb{E}[\: p_t^j]}{\mathbb{E}[\: p_t]}\leq \lfloor (\sqrt{2}+1)/\widehat\varepsilon \rfloor e^{-kt}\rightarrow 0 \quad \text{as} \quad t\rightarrow\infty.  \]
This completes the bootstrap argument and shows that (\ref{equ1}) can be improved to 
\begin{equation} \underset{t\rightarrow\infty}{\lim}(\mathbb{E}\times P)\left(|Z((1-\widehat\varepsilon)t)|=1 \mid \mathsf{S_t}\right)=1. \eqno \qed  \nonumber  
\end{equation}

\vspace{5mm}

\noindent \textbf{Proof of Theorem~\ref{theorem2}}

Fix $0<\widehat\varepsilon<1$. Let $K_t=\left\{|Z((\eta^*-\widehat\varepsilon)t)|\leq f(t)\right\}$. First, we show that for any function $f:\mathbb{R}_+ \to \mathbb{R}_+$ such that $\underset{t\rightarrow\infty}{\lim}f(t)=\infty$, 
\begin{equation} \underset{t\rightarrow\infty}{\lim}(\mathbb{E}\times P)\left(K_t \mid \mathsf{S_t}\right)=1. \label{equ5}
\end{equation}
To show \eqref{equ5}, in view of \eqref{002}, it suffices to show that
\begin{equation} \underset{t\rightarrow\infty}{\limsup}\:\frac{1}{t}\log(\mathbb{E}\times P)\left(K_t^c \cap \mathsf{S_t} \right)<-I(l,f,\beta,d). \label{equ51}
\end{equation}
Obviously, we may (and will) assume that  $\underset{t\rightarrow\infty}{\lim}\frac{f(t)}{t^n}=0$ for some $n\in\mathbb{N}$.

We follow an argument similar to the one in Section 3.2 in \cite{E2003}. For $t\ge 0$, let $$\eta_t:=\sup\left\{\eta\in[0,1]:|Z(\eta t)|\leq f(t) \right\},$$ and notice that $\left\{\eta_t< x\right\}\subseteq\left\{|Z(xt)|> f(t)\right\}$ for $x\in(0,1]$, and that $K_t^c=\left\{\eta_t\leq\eta^*-\widehat\varepsilon\right\}$.
Introducing the conditional probabilities
\begin{equation} P^{(i,n)}_t(\cdot)=P\left(\ \cdot \ \middle| \ \frac{i}{n}\leq \eta_t <\frac{i+1}{n}\right), \quad i=0,1,\ldots,n-1, \nonumber
\end{equation} we have that, for every $n\in\left\{1,2,3,\ldots\right\}$,
\begin{align} (&\mathbb{E}\times P)(K_t^c \cap\mathsf{S_t}) \nonumber \\
\leq & \sum_{i=0}^{\left\lceil  (\eta^*-\widehat\varepsilon)n \right\rceil-1} (\mathbb{E}\times P)\left(\mathsf{S_t}\cap \left\{\frac{i}{n}\leq \eta_t <\frac{i+1}{n}\right\} \right)+(\mathbb{E}\times P)\left(\mathsf{S_t}\cap \left\{\eta_t=\eta^*-\widehat\varepsilon\right\} \right) \nonumber \\
\leq & \sum_{i=0}^{\left[ (\eta^*-\widehat\varepsilon)n \right\rceil-1} \exp\left[-\beta \frac{i}{n}t+o(t)\right](\mathbb{E}\times P^{(i,n)}_t)(\mathsf{S_t})+\exp\left[-\beta(\eta^*-\widehat\varepsilon)t+o(t)\right](\mathbb{E}\times P)(\mathsf{S_t}\mid \eta_t=\eta^*-\widehat\varepsilon ). \label{bigsum}
\end{align}
Consider the particles alive at time $t(i+1)/n$ (resp. $(\eta^*-\widehat\varepsilon)t$), and the balls with radius\footnote{I.e., the ball of critical radius for the remaining time.}
\begin{equation}
\rho_t^{(i,n)}:=(1-\varepsilon)\sqrt{2\beta m}\left(1-\frac{i+1}{n}\right)t, \quad
\text{resp.} \:\:\rho_t^*:=(1-\varepsilon)\sqrt{2\beta m}(1-\eta^*+\widehat\varepsilon)t \nonumber
\end{equation}
around them, and finally, let $\mathsf{TF}_t^{(i,n)}$ (resp. $\mathsf{TF}_t$) be the number of trap-free\footnote{In the sense that they do not receive points from $\Pi$.} balls among these. 
Define the events 
$$A_t^{(i,n)}:=\left\{\mathsf{TF}_t^{(i,n)}\ge 1\vee (|Z(t(i+1)/n)|-f(t))\right\};\ 
A_t:=\{\mathsf{TF}_t\ge 1\vee (|Z((\eta^*-\widehat\varepsilon)t)|-f(t))\}.$$
Use the trivial estimate
\begin{equation} \left(\mathbb{E}\times P^{(i,n)}_t\right)\left(\mathsf{S_t}\right)\leq \left(\mathbb{E}\times P^{(i,n)}_t\right)\left( A_t^{(i,n)}\right)+\left(\mathbb{E}\times P^{(i,n)}_t\right)\left(\mathsf{S_t} \mid [A_t^{(i,n)}]^c\right), \label{estimate}
\end{equation}
and a similar estimate for $(\mathbb{E}\times P)(\mathsf{S_t}\mid \eta_t=\eta^*-\widehat\varepsilon)$. Letting $\eta=i/n$, it is not hard to show that (see the proof of \cite[Thm.1]{OCE2017} for details) 
\begin{align}
\exp\left[-\beta \frac{i}{n}t+o(t)\right]&\left(\mathbb{E}\times P^{(i,n)}_t\right)\left( A_t^{(i,n)}\right)\leq  \nonumber \\
&\:\:\:\exp\left[-\underset{\eta \in [0,(\eta^*-\widehat\varepsilon)],c \in [0,\sqrt{2\beta}]}{\text{min}} \left\{\beta\eta+\frac{c^2}{2\eta}+l g_d(\sqrt{2\beta m}(1-\eta),c)\right\}+o(t)\right] \label{neweq}
\end{align}
(and similarly for $\left(\mathbb{E}\times P_t\right)\left( A_t\right)$). We know from \cite[Thm.2]{OCE2017} that $(\eta^*,c^*)$ is the unique pair of minimizers for the variational problem in \eqref{I}, and the parameter $\eta$ on the right-hand side of \eqref{neweq} is bounded away from $\eta^*$. Therefore, putting \eqref{estimate} and \eqref{neweq}  together with \eqref{bigsum}, to obtain \eqref{equ51}, it suffices to show that  
\begin{equation} \left(\mathbb{E}\times P^{(i,n)}_t\right)
\left(\mathsf{S_t} \mid \left[A_t^{i,n}\right]^c\right)=\text{SES} \quad \text{and} \quad 
\left(\mathbb{E}\times P\right)
\left(\mathsf{S_t} \mid [A_t]^c\right)=\text{SES} \label{eq0}
\end{equation}
 in $t$ for $i=0,1,2,\ldots,\left\lceil(\eta^*-\widehat\varepsilon)n \right\rceil-1$ for some large enough $n$. 

We now verify the first statement in (\ref{eq0}); the second could be verified similarly. Let $p^{i,n}(t)$ be the probability that a BBM, which starts its life at time $\frac{i+1}{n}t$ with a single particle at a point $x\in\mathbb{R}^d$, and whose $\rho_t$-ball (centered at $x$) receives a point from $\text{supp}(\Pi)$, avoids the trap field in the time interval $[\frac{i+1}{n}t,t]$. It is enough to show that $[p^{i,n}(t)]^{\lfloor f(t) \rfloor}$ is SES for $i=0,1,2,\ldots,\left\lceil(\eta^*-\widehat\varepsilon)n \right\rceil-1$. We may drop the floor function and work with $f(t)$ directly. Furthermore, we may work with the entire interval $[0,t]$ instead of $[\frac{i+1}{n}t,t]$. (It is enough to consider $[0,t]$ instead of the smaller interval $[\frac{i+1}{n}t,t]$ as this will not affect the final probabilistic cost being SES. In more detail, we show that for all large $t$, $p^{-1,n}(t)$ is bounded from above by $e^{-\kappa t}$ for some $\kappa>0$. If we consider the smaller interval $[\frac{i+1}{n}t,t]$, then $p^{i,n}(t)$ will be bounded by $e^{-\kappa't}$, where $0<\kappa'<\kappa$.) Now let $p(t):=p^{-1,n}(t)$ and $\rho_t:=\rho_t^{(-1,n)}$. Note that since we are conditioning only on the event that the $\rho_t$-ball around the particle contains a point from $\text{supp}(\Pi)$, we may suppose that $x=0$, that is, our problem becomes the trap-avoiding probability of a BBM, starting with a single particle at the origin, presuming that $\bar{B}(0,\rho_t)\cap \text{supp}(\Pi)\neq \emptyset$. Now, by Lemma 1, $p(t)$ is at most exponentially small in $t$,  and since by assumption, $\underset{t\rightarrow\infty}{\lim}f(t)=\infty$, it follows that
\begin{equation} (p(t))^{f(t)} \quad \text{is} \quad \text{SES}.\nonumber
\end{equation}
This completes the proof of (\ref{equ5}).

\smallskip

Next, following a similar strategy as in the proof of Theorem~\ref{theorem1}, we reduce the number of particles to $1$, i.e., we show that
\begin{equation} \underset{t\rightarrow\infty}{\lim}(\mathbb{E}\times P)\left(|Z((1-\widehat\varepsilon)t)|=1\mid \mathsf{S_t}\right)=1. \nonumber
\end{equation}
We consider the cases $d=1$ and $d\geq 2$ separately, since for $d\geq 2$, the trap field is not uniform due to (\ref{eq00}), whereas for $d=1$, it is. In what follows, we use the notation from the proof of Theorem~\ref{theorem1}. 

\bigskip

\noindent \underline{The case $d=1$ and $l>l_{cr}$}. 

\medskip
If $d=1$, then (\ref{equ}) holds. In view of this, we first show that $\mathbb{E}[\: p_t^2]=o(\mathbb{E}[\: p_t])$ as $t\rightarrow \infty$. Let $\varepsilon>0$ and $\rho_t:=(1-\varepsilon)\sqrt{2\beta m}t$ be a subcritical radius. 
Then, the same calculation as in \eqref{with.indicators} yields that for all large $t$,
$$\mathbb{E}[\: p_t^2]\leq  \: e^{-ct}\mathbb{E}[\: p_t]+\exp\left[-2 l (1-\varepsilon)\sqrt{2\beta m}t\right],$$
where $l$ is the constant in  the trap intensity. Now, since $l>l_{cr}$, we put $l-l_{cr}=:\delta>0$. From \cite{OCE2017}, we know that, when $d=1$, the variational problem in \eqref{I} exhibits a crossover at $l_{cr}=\frac{1}{2}\sqrt{\beta/(2m)}$. Therefore, choose $\varepsilon$ small enough ($0<\varepsilon<\min\left\{1/2,\delta\sqrt{2\beta m}\right\}$ will suffice) so that
\begin{align} 2 l (1-\varepsilon)\sqrt{2\beta m}&=2 (\delta+l_{cr}) (1-\varepsilon)\sqrt{2\beta m} \nonumber \\
&=2 \delta (1-\varepsilon)\sqrt{2\beta m}+\beta (1-\varepsilon)>\beta. \nonumber
\end{align}
When $d=1$ and $l>l_{cr}$, for a BBM with $p_0=p_1=0$, we know from \cite[Thm.2.2]{OCE2017} that \eqref{I} becomes $I=\beta$, meaning that $\mathbb{E}[\: p_t]=\exp[-\beta t+o(t)]$. Therefore, 
\begin{equation} \frac{\mathbb{E}[\: p_t^2]}{\mathbb{E}[\: p_t]}\leq e^{-ct}+\frac{\exp[2 l (1-\varepsilon)\sqrt{2\beta m}t]}{\mathbb{E}[\: p_t]}\rightarrow 0 \:\: \text{as} \:\: t\rightarrow \infty, \nonumber
\end{equation}
where the convergence of the ratio $\frac{\mathbb{E}[\: p_t^2]}{\mathbb{E}[\:p_t]}$ to $0$ is at least exponentially fast in $t$. Hence, we conclude that for all large $t$, we have
\begin{equation} \frac{\mathbb{E}[\:p_t^2]}{\mathbb{E}[\:p_t]}\leq e^{-kt} \label{equ7}
\end{equation} 
for some $k>0$. Now let $f(t)=\lfloor t \rfloor$. Then, (\ref{equ5}) gives:
\begin{equation} \underset{t\rightarrow\infty}{\lim}(\mathbb{E}\times P)\left(|Z((1-\widehat\varepsilon)t)|\leq \lfloor t \rfloor \mid \mathsf{S_t}\right)=1. \label{equ8}
\end{equation}
It then follows from (\ref{equ7}) that  
\[ \frac{\mathbb{E}[\: p_t^j]}{\mathbb{E}[\: p_t]}\leq e^{-kt} \quad \text{for every} \quad 2\leq j\leq \lfloor t \rfloor \quad \text{for all large $t$,}\]
which implies that
\begin{equation}\frac{\sum_{j=2}^{\lfloor t \rfloor} \mathbb{E}[\: p_t^j]}{\mathbb{E}[\: p_t]}\leq \lfloor t \rfloor e^{-kt} \quad \text{for all large $t$}.   \label{equ81}
\end{equation}
Next, using the proof of \cite[Lemma 6]{OCE2017}, with the replacement of the term $\lfloor t^{d+\varepsilon} \rfloor$ therein by $\lfloor t \rfloor$, we see that 
\begin{equation} \frac{P(|Z(t)|\leq \lfloor t \rfloor )}{P(|Z(t)|=1)}\leq \lfloor t \rfloor \quad \text{for all large $t$} \label{equ82}. 
\end{equation}  
Finally, writing
\begin{align} & \frac{(\mathbb{E}\times P)\left(2\leq |Z((1-\widehat\varepsilon)t)|\leq \lfloor t \rfloor \mid \mathsf{S_t}\right)}{(\mathbb{E}\times P)\left(|Z((1-\widehat\varepsilon)t)=1 \mid \mathsf{S_t}\right)} = \nonumber \\
&\quad \quad \quad \frac{(\mathbb{E}\times P)\left(\mathsf{S_t} \mid 2\leq |Z((1-\widehat\varepsilon)t)|\leq \lfloor t \rfloor \right)}{(\mathbb{E}\times P)\left(\mathsf{S_t} \mid |Z((1-\widehat\varepsilon)t)|=1 \right)}\cdot\frac{P(2\leq |Z((1-\widehat\varepsilon)t)|\leq \lfloor t \rfloor)}{P(|Z((1-\varepsilon)t)|=1)}, \nonumber
\end{align}
it follows from (\ref{equ81}) and (\ref{equ82}) that (\ref{equ8}) can be improved as 
\begin{equation} \underset{t\rightarrow\infty}{\lim}(\mathbb{E}\times P)\left(|Z((1-\widehat\varepsilon)t)|=1 \mid \mathsf{S_t}\right)=1.  \nonumber 
\end{equation}

\bigskip

\noindent \underline{The case $d\geq 2$ and $l>l_{cr}$}. 

\medskip
When $d\geq 2$, the trap intensity is no longer uniform; instead, it is radially decaying. As before, let $Y_s^x=Y_s^x(\omega)$ be the probability of survival up to time $s\geq 0$ for a BBM that starts with a single particle at position $x\in\mathbb{R}^d$. Like before,  Cauchy-Schwarz yields
\begin{equation} \mathbb{E}[Y^x_t Y^y_t]\leq \sqrt{\mathbb{E}[(Y^x_t)^2]\mathbb{E}[(Y^y_t)^2]}, \nonumber
\end{equation}
however, unlike in the case of the uniform trap field, we may not replace $Y^x_t$ by $Y^0_t$. Instead, we proceed as follows. For fixed $0<\widehat\varepsilon<\eta^*$, let 
\[Y^{X(s)}:=Y_{(1-\eta^*+\widehat\varepsilon)t}^{X(s)}, \]
where $X=(X(s))_{s\geq 0}$ represents a standard Brownian path starting at the origin. Let $\widehat{X}_t:=X((\eta^*-\widehat\varepsilon)t)$. We want to show that there exists a constant $c>0$ such that
\begin{equation} \frac{\left(\mathbb{E}\times E\right)\left [Y^{\widehat{X^1_t}} Y^{\widehat{X^2_t}}\right]}{\left(\mathbb{E}\times E\right)\left[Y^{\widehat{X}_t}\right]}\leq e^{-ct} \quad \text{for all large $t$}, \nonumber
\end{equation} 
where $\widehat{X^1_t}$ and $\widehat{X^2_t}$ represent the positions of the particles present at time $\left(\eta^*-\widehat\varepsilon\right)t$, and the expectation $E$ is placed for the purpose of averaging over these positions. We note that $\widehat{X^1_t}$ and $\widehat{X^2_t}$ are dependent random variables, yet they are both identically distributed as $\widehat{X}_t$. Using the  inequality between the geometric and quadratic means ($ab\leq (a^2+b^2)/2$), we have
\begin{equation} (\mathbb{E}\times E) [Y^{\widehat{X^1_t}} Y^{\widehat{X^2_t}}]\leq \frac{1}{2}(\mathbb{E}\times E) \left[\left(Y^{\widehat{X^1_t}}\right)^2 + \left(Y^{\widehat{X^2_t}}\right)^2\right]=(\mathbb{E}\times E)\left[\left(Y^{\widehat{X}_t}\right)^2\right]. \nonumber
\end{equation} 
Hence, letting $Y:=Y^{\widehat{X}_t}$, it suffices to show that
\begin{equation} \frac{(\mathbb{E}\times E) [Y^2]}{(\mathbb{E}\times E)[Y]}\leq e^{-ct} \quad \text{for all large $t$} \quad \text{for some $c>0$}. \label{1}
\end{equation}   
Let $\rho_t^*:=\sqrt{2\beta m}(1-\varepsilon)(1-\eta^*+\widehat\varepsilon)t$ and denote the event 
$$\widehat\Omega_t:=\{\text{supp}(\Pi)\cap \bar{B}(\widehat{X}_t,\rho_t^*)\neq \emptyset\}.$$ 
To bound the numerator of (\ref{1}) from above, we write
\begin{align} 
(\mathbb{E}\times E)\left[Y^2\right]&=\:(\mathbb{E}\times E)\left[Y^2\mathbbm{1}_{\widehat\Omega_t}\right]+(\mathbb{E}\times E)\left[Y^2\mathbbm{1}_{\widehat\Omega_t^c}\right] \nonumber \\   
& \leq \: e^{-ct}(\mathbb{E}\times E)[Y]+ (\mathbb{E}\times E)\left[\mathbbm{1}_{\widehat\Omega_t^c}\right]  = \: e^{-ct}(\mathbb{E}\times E)[Y]+(E\times \mathbb{P})(\widehat\Omega_t^c)   \label{0}
\end{align}
for all large $t$, where we have used Lemma~\ref{lemma1}, which implies that with $(\mathbb{P}\times P)$-probability $1$, $Y\mathbbm{1}_{\widehat\Omega_t}\leq e^{-ct}$ for some $c>0$ for all large $t$, in passing to the first inequality, and Fubini's theorem in passing to the last equality. By conditioning $\widehat{X}_t$ on the events $\left\{\frac{i-1}{n}\sqrt{2\beta}t\leq \widehat{X}_t \leq \frac{i}{n}\sqrt{2\beta}t\right\}$ for $i=1,2,\ldots,n$ and on $\left\{\widehat{X}_t>\sqrt{2\beta}t\right\}$, and following an argument similar to the proof of the upper bound of Theorem 1 in \cite{OCE2017}, it is not hard to show that 
\begin{align} & (E\times \mathbb{P})(\widehat\Omega_t^c) \nonumber \\
= & \: \exp\left[-\min_{x\in[0,\sqrt{2\beta}]}\left\{\frac{x^2}{2(\eta^*-\widehat\varepsilon)}+l \, g_d\left(\rho_t^*/t,x \right) \right\} t +o(t) \right]+\exp[-\frac{\beta}{\eta^*-\widehat\varepsilon} t +o(t)], \label{2}
\end{align} 
where \cite[Lemma 5]{OCE2017} was used to control the probabilistic cost of linear Brownian displacements.

\smallskip

To bound the denominator of (\ref{1}) from below, we proceed as follows:
\begin{align} (\mathbb{E}\times E)\left[Y\right]\geq & \: P\left(\widehat{X}_t\geq \left(1-\frac{\widehat\varepsilon}{\eta^*}\right)c^* t\right)(\mathbb{E}\times E)\left[Y \mid \widehat{X}_t\geq \left(1-\frac{\widehat\varepsilon}{\eta^*}\right)c^* t\right]  \nonumber \\
= & \: \exp\left[-\frac{(1-\frac{\widehat\varepsilon}{\eta^*})^2 (c^*)^2}{2(\eta^*-\widehat\varepsilon)}t +o(t) \right] \nonumber \\
& \times \exp\left[-\left\{\beta \widehat\varepsilon + \frac{(\widehat\varepsilon c^*/\eta^*)^2}{2\widehat\varepsilon} + l \, g_d\left(\sqrt{2\beta m}(1-\eta^*),c^* \right)\right\}t +o(t) \right], \label{3}
 \end{align}
where we have used the following survival strategy to bound the second factor on the right-hand side: 
\begin{itemize}
\item suppress the branching of the BBM;
\item  move the single particle to an extra distance of 
$(\widehat\varepsilon c^*/\eta^*)t$ in the time interval $[(\eta^*-\widehat\varepsilon)t,\eta^*t]$, 
where the extra distance is in the same direction as the position vector of the single particle at time 
$(\eta^*-\widehat\varepsilon)t$; 
\item empty the region $B(c^*t \textbf{e},\sqrt{2\beta m}(1-\eta^*)t+\delta t)$ from traps, where $\delta>0$ and $\textbf{e}$ is the unit vector in the direction of the position vector of the single particle at time $(\eta^*-\widehat\varepsilon)t$;  
\item let the system branch freely in the remaining time interval $[\eta^*t,t]$ inside this ball. 
\end{itemize}
 (For details regarding this type of survival strategy, please see \cite[Sect.5.1]{OCE2017}.) 
Finally, let $\delta\rightarrow 0$.
\smallskip

Note that the distances $(1-\widehat\varepsilon/\eta^*)c^*=:x_1$ and $\widehat\varepsilon c^*/\eta^*=:x_2$ were chosen so as to satisfy the system
$$
x_1+x_2= \: c^*;\qquad
\frac{x_1^2}{2(\eta^*-\widehat\varepsilon)} +\frac{x_2^2}{2\widehat\varepsilon}= \: \frac{(c^*)^2}{2\eta^*},  
$$
so that (\ref{3}) becomes 
\begin{equation} (\mathbb{E}\times E)\left[Y\right]\geq \exp\left[-\left\{\beta \widehat\varepsilon + \frac{(c^*)^2}{2\eta^*} + l \, g_d\left(\sqrt{2\beta m}(1-\eta^*),c^* \right)\right\}t +o(t) \right]. \label{4}
\end{equation}
To prove that 
\begin{equation} (E\times \mathbb{P})\left(\widehat\Omega_t^c\right)=o\left((\mathbb{E}\times E)\left[Y\right]\right),  \label{5}
\end{equation}
we multiply the right-hand sides of (\ref{2}) and (\ref{4}) both by $\exp[-\beta(\eta^*-\widehat\varepsilon)t]$, and since this factor doesn't depend on the minimizing parameter in (\ref{2}) and since $\eta^*<1$ so that the second term on the right-hand side of (\ref{2}) is harmless, it is enough to show that 
\begin{align} & \min_{x\in[0,\sqrt{2\beta}]}\left\{\beta(\eta^*-\widehat\varepsilon) + \frac{x^2}{2(\eta^*-\widehat\varepsilon)} + l \, g_d\left(\sqrt{2\beta m}(1-\eta^*+\widehat\varepsilon),x \right) \right\} \nonumber \\
& \: \qquad \qquad > \beta \eta^* + \frac{(c^*)^2}{2\eta^*} + l \, g_d\left(\sqrt{2\beta m}(1-\eta^*),c^* \right). \label{6} 
\end{align} 
(Above, in writing the function $g_d$ from (\ref{2}), we have used that $\rho_t^*=\sqrt{2\beta m}(1-\varepsilon)(1-\eta^*+\widehat\varepsilon)t$ and then let $\varepsilon\rightarrow 0$.) Now (\ref{5}) follows, because we know from \cite[Thm.2]{OCE2017} that the pair $(\eta^*,c^*)$ is the unique pair of minimizers for the variational problem
\begin{equation}
\underset{\eta \in [0,1],c \in [0,\sqrt{2\beta}]}{\text{min}} \left\{\beta\eta+\frac{c^2}{2\eta}+l g_d(\sqrt{2\beta m}(1-\eta),c)\right\}. \label{7}
\end{equation}
However, on the left-hand side of (\ref{6}), we have the evaluation of the function in (\ref{7}) at $(\eta,c)=(\eta^*-\widehat\varepsilon,x)$ for some $x\in[0,\sqrt{2\beta}]$, and (\ref{5}) then follows by the uniqueness of minimizers. To obtain (\ref{1}), use (\ref{0}) and (\ref{5}). To complete the proof, follow a similar argument as in the last part of the proof of the case $d=1$ and $l>l_{cr}$. \qed

\begin{remark}
As we have noted in the proof of Theorem~\ref{theorem1}, for a `free' BBM, the probabilistic cost of having $1$ particle and at most $k$ particles are asymptotically similar up to a constant as $t\rightarrow\infty$. What Theorem~\ref{theorem1} and Theorem~\ref{theorem2} say is that, for the trap fields considered here, for large $t$, whenever the system has to suppress branching in order to survive from traps up to $t$, with overwhelming probability, it must do so completely up to time $(1-\widehat\varepsilon)t$ (resp. $(\eta^*-\widehat\varepsilon)t$). Furthermore, the proofs reveal that having even $2$ particles instead of $1$ at $(1-\widehat\varepsilon)t$ (resp. $(\eta^*-\widehat\varepsilon)t$) is exponentially unlikely in $t$. This shows that conditioning a BBM on survival among traps has a drastic effect on its population size. 
\end{remark}

\begin{remark} The proofs of Theorem~\ref{theorem1} and Theorem~\ref{theorem2} reveal something stronger than the statement of the theorems; namely, that conditional on survival up to time $t$, the probability of the respective complement events $\left\{|Z((1-\widehat\varepsilon)t)|>1\right\}$ and $\left\{|Z((\eta^*-\widehat\varepsilon)t)|>1\right\}$ converge to zero exponentially fast in $t$.
\end{remark}

\section{Particle production along a skeletal line}\label{section5}

Theorem~\ref{theorem1} and Theorem~\ref{theorem2} are stated for $p_0=0$. In Section~\ref{section6}, they will be extended to the case where $p_0>0$ (see Theorem~\ref{theorem3} and Theorem~\ref{theorem4}), which yields a positive probability of extinction for the BBM. In this case, we condition the BBM on non-extinction for meaningful results on optimal survival strategies. A detailed treatment of a BBM conditioned on non-extinction is given in \cite{OCE2017} (see in particular Lemma 4 and Proposition 2 therein). Here, in preparation for Section~\ref{section6}, we briefly mention the development needed, followed by the statement and proof of Lemma~\ref{lemma2}. Conditioned on the event of non-extinction (denoted by $\mathcal{E}^c$), recall that the BBM has the following two-type decomposition:
\begin{equation} (Z(t))_{t\geq 0}=\left(Z^1(t),Z^2(t)\right)_{t\geq 0}, \nonumber
\end{equation}
where $Z^1$ is the process consisting of the `skeleton' particles, and $Z^2$ is the one consisting of the `doomed' particles. Skeleton particles are those with infinite lines of descent, whereas the doomed particles have finite lines of descent. We refer to the totality of all skeleton particles as the `skeleton' so that the tree of $|Z|$ conditioned on non-extinction can be described as an infinite skeleton decorated with infinitely many finite `bushes' composed of doomed particles. It is clear that conditioning $Z$ on $\mathcal{E}^c$ is equivalent to the initial condition $\left(|Z^1(0)|,|Z^2(0)|\right)=(1,0)$. By a `skeletal ancestral line up to time $t$', we mean the continuous trajectory traversed up to time $t$ by a skeleton particle present at time $t$, concatenated with the trajectories of all its ancestors including the one traversed by the initial particle. We use the term `skeletal line' in short to mean a skeletal ancestral line up to time $t$. We say that a doomed particle is \textit{produced} by a skeletal line if the most recent skeleton ancestor of the doomed particle is a part of this skeletal line. Note that by this definition, a doomed particle may be produced by more than one skeletal line, but it has to be produced by at least one skeletal line. The following lemma gives an upper bound on the number of doomed particles, all alive at the present time, which are produced by a given single skeletal line.

\begin{lemma}[Very few doomed particles] \label{lemma2}Let $\log^{(0)}(t):=t$ and $\log^{(n)}(t):=\log(\log(\ldots(\log t)\ldots))$ for $n\in\mathbb{N}$ be the logarithm function iterated $n$ times. Then, for any $n\in\mathbb{N}_0$, for a fixed skeletal line, the probability that this line has produced more than $\log^{(n)}(t)$ doomed particles in $[0,t]$, which are all alive at time $t$, goes to zero at least at the rate $1/\log^{(n)}(t)$ as $t\rightarrow\infty$.  
\end{lemma}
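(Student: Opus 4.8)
The plan is to fix a skeletal line and track the doomed particles hanging off it that are still alive at time $t$. Along a skeletal ancestral line, at each branching event the parent is a skeleton particle, and (by the skeleton decomposition, see \cite{OCE2017}) it produces one or more skeleton children together with a (possibly zero) number of doomed children, each of which initiates an independent \emph{doomed subtree} — a branching process conditioned on eventual extinction. A doomed child born at time $s\le t$ contributes to our count only if its subtree is still alive at time $t$, i.e.\ if it survives for the remaining duration $t-s$. Since branching events along the skeletal line occur at the (time-changed) rate governed by $\beta$ and the offspring law, the number of doomed children born in any unit time interval has bounded expectation, uniformly in position along the line; call a generic such bound $C$. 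Thus the expected number of doomed particles produced by the fixed skeletal line and alive at time $t$ is at most
\[
C\int_0^t q^{\mathrm{d}}(t-s)\,\mathrm{d}s = C\int_0^t q^{\mathrm{d}}(u)\,\mathrm{d}u,
\]
where $q^{\mathrm{d}}(u)$ denotes the probability that a doomed subtree (a subcritical-type branching process — the GW process conditioned on extinction, which has mean offspring number $f'(q)=1-\alpha<1$) is still alive at time $u$.

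The key input is that $q^{\mathrm{d}}(u)$ decays \emph{exponentially} in $u$: a continuous-time branching process with strictly subcritical mean dies out with survival probability $\le e^{-c u}$ for some $c>0$ and all large $u$ (standard; e.g.\ \cite{AN1972}, using $f'(q)<1$ when $p_0>0$). Hence $\int_0^t q^{\mathrm{d}}(u)\,\mathrm{d}u \le \int_0^\infty q^{\mathrm{d}}(u)\,\mathrm{d}u =: C'<\infty$, so the expected number $D_t$ of doomed particles produced by the fixed skeletal line and alive at time $t$ is bounded by $CC'$, a constant independent of $t$. By Markov's inequality,
\[
P\!\left(D_t > \log^{(n)}(t)\right) \le \frac{E[D_t]}{\log^{(n)}(t)} \le \frac{CC'}{\log^{(n)}(t)} \xrightarrow[t\to\infty]{} 0,
\]
at rate $1/\log^{(n)}(t)$, which is exactly the claimed bound, for every $n\in\mathbb{N}_0$ (the case $n=0$ being trivial since $\log^{(0)}(t)=t\to\infty$ and $E[D_t]$ is bounded).

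The main obstacle — really the only nontrivial point — is setting up the bookkeeping cleanly: making precise, via the skeleton decomposition, that the doomed subtrees attached along a \emph{fixed} skeletal line are independent of one another and of the skeleton, that their birth times are driven by a rate process with uniformly bounded intensity (so the $\int_0^t(\cdot)\,\mathrm{d}s$ representation of $E[D_t]$ is legitimate), and that each is distributed as the offspring process conditioned on extinction, whose per-particle mean is $f'(q)<1$. Once this structure and the exponential tail of $q^{\mathrm{d}}$ are in hand, the rest is the one-line Markov estimate above. Note the statement is deliberately weak (a $1/\log^{(n)}(t)$ rate for any $n$), so no refinement of the exponential tail is needed — boundedness of $E[D_t]$ already suffices.
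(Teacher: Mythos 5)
Your proof is correct but takes a genuinely different, and substantially simpler, route than the paper. The paper proves the lemma by a multi-scale inductive argument: it partitions $[0,t]$ into intervals pinned at distances $\log^{(k)}t$ from the terminal time, and for each interval it bounds (via a Poissonian tail estimate on the number of branchings along the skeletal line, Markov's inequality on the number of doomed offspring, and the exponential extinction bound of Proposition~\ref{proposition1}) the probability that some doomed subtree born in that interval is still alive at time $t$. It then concludes that, with high probability, all live doomed particles descend from subtrees born in a window of width $\log^{(n+1)}t$ before $t$, and controls the progeny from that window. You bypass all of this by bounding the first moment $E[D_t]$ directly by a constant independent of $t$ and invoking Markov; this gives precisely the $1/\log^{(n)}t$ rate claimed, with far less bookkeeping. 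The paper's argument arguably gives a more refined picture (e.g.\ \emph{when} the surviving doomed subtrees were created), but for the lemma as stated your approach is the more economical one.

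Two small points worth noting. First, your integrand $q^{\mathrm d}(t-s)$, the survival probability of a doomed subtree of age $t-s$, bounds the expected number of \emph{live doomed subtrees}, not the expected number of \emph{live doomed particles}, which is what the lemma counts. The correct replacement is the expected population size of a subcritical subtree at age $u$, namely $e^{\beta m^{\mathrm d}u}$ with $m^{\mathrm d}=f'(q)-1<0$; since this also decays exponentially (and $q^{\mathrm d}(u)\le e^{\beta m^{\mathrm d}u}$ anyway), the integral still converges and the bound $E[D_t]\le \rho/(1-f'(q))$ for all $t$ goes through unchanged. Second, you flag independence of the doomed subtrees as a point to be nailed down, but for the first-moment bound only linearity of expectation is used, so independence is not actually required.
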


In order to prove Lemma~\ref{lemma2}, we first present two preparatory propositions. 
The first provides an upper bound on the non-extinction probability of a subcritical BBM up to time $t$, and follows from the trivial estimate $P(|Z(t)|>0)\leq E[\,|Z(t)|\,]$; the second follows directly from a standard Poissonian tail bound.

\begin{proposition}\label{proposition1} Let $Z$ be a subcritical BBM with rate $\beta>0$ and offspring p.g.f.\ $f$, and $|Z|$ be the associated total-mass process. Let $\mu=f'(1)$ be the mean number of offspring so that $m:=\mu-1<0$. Then, for any $t\geq 0$,
\begin{equation} P(|Z(t)|>0)\leq e^{\beta m t}. \nonumber
\end{equation}
\end{proposition}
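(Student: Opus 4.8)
The plan is to bound $P(|Z(t)|>0)$ by $E[\,|Z(t)|\,]$ and then compute the latter explicitly. First I would note that for a nonnegative integer-valued random variable $X$, Markov's inequality gives $P(X>0)=P(X\geq 1)\leq E[X]$; applying this with $X=|Z(t)|$ yields the inequality $P(|Z(t)|>0)\leq E[\,|Z(t)|\,]$. It then remains to identify $E[\,|Z(t)|\,]$ for a continuous-time branching process with branching rate $\beta$ and offspring mean $\mu$.

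Next I would recall the standard first-moment formula for the total-mass process. Setting $u(t):=E[\,|Z(t)|\,]$, a renewal-type decomposition on the time of the first branching event (which occurs at an $\mathrm{Exp}(\beta)$ time) gives $u(t)=e^{-\beta t}+\int_0^t \beta e^{-\beta s}\,\mu\,u(t-s)\,ds$, since before the first branch there is exactly one particle and after it there are on average $\mu$ independent copies of the process run for the remaining time. Differentiating (or substituting $u(t)=e^{\beta m t}$ and checking) shows $u'(t)=\beta(\mu-1)u(t)=\beta m u(t)$ with $u(0)=1$, whence $u(t)=e^{\beta m t}$. This is exactly the formula quoted earlier in the excerpt (e.g.\ the remark that $E[N(t)]=\exp(\beta m t)$, cited from \cite[Sect.8.11]{KT1975}), and it holds regardless of the sign of $m$; in the subcritical case $m<0$ it is an exponentially decaying quantity.

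Combining the two steps gives $P(|Z(t)|>0)\leq E[\,|Z(t)|\,]=e^{\beta m t}$ for every $t\geq 0$, which is the claim. There is essentially no obstacle here: the only points requiring (minor) care are that $|Z(t)|$ is a genuine nonnegative integer-valued random variable so that $\{|Z(t)|>0\}=\{|Z(t)|\geq 1\}$ and Markov's inequality applies cleanly, and that the first-moment ODE/renewal computation is the standard one for age-independent (Markov) branching processes, so it may simply be cited rather than rederived. The subcriticality hypothesis $m<0$ plays no role in the derivation itself; it only makes the bound useful (a decaying upper bound on the non-extinction probability), which is the purpose for which the proposition will be invoked in the proof of Lemma~\ref{lemma2}.
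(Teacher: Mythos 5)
Your proof is correct and follows exactly the same route as the paper: the paper states that the proposition ``follows from the trivial estimate $P(|Z(t)|>0)\leq E[\,|Z(t)|\,]$'' combined with the known first-moment formula $E[\,|Z(t)|\,]=e^{\beta m t}$. Your additional derivation of the first-moment ODE is a standard computation that the paper simply cites rather than rederives; otherwise the two arguments coincide.
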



\begin{remark} For precise results on $P(|Z(t)|>0)$, please see \cite[Thm.2.4]{AH1983}.
\end{remark}


\begin{proposition}[Tail estimate]\label{proposition2} Let  $Y$ be  a Poisson random variable with parameter $\lambda$. Then for $x>\lambda$,
\begin{equation} P(Y\geq x)\leq e^{-k\lambda}, \nonumber
\end{equation}
where $k=k(x/\lambda)$ is a positive constant.
\end{proposition}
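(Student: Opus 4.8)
The plan is to prove Proposition~\ref{proposition2} by a standard Chernoff (exponential Markov) argument, exploiting the explicit form of the moment generating function of the Poisson law. Recall that if $Y$ is Poisson with parameter $\lambda>0$, then $E[e^{\theta Y}]=\exp[\lambda(e^\theta-1)]$ for every $\theta\in\mathbb{R}$. Consequently, for any $\theta>0$, applying Markov's inequality to the nonnegative random variable $e^{\theta Y}$ yields
\begin{equation} P(Y\geq x)=P\!\left(e^{\theta Y}\geq e^{\theta x}\right)\leq e^{-\theta x}\,E[e^{\theta Y}]=\exp\!\left[-\theta x+\lambda(e^\theta-1)\right]. \nonumber \end{equation}

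The next step is to optimize the exponent $\varphi(\theta):=-\theta x+\lambda(e^\theta-1)$ over $\theta>0$. Since $\varphi'(\theta)=-x+\lambda e^\theta$, the unconstrained minimizer solves $e^\theta=x/\lambda$, i.e.\ $\theta^\ast=\log(x/\lambda)$; the hypothesis $x>\lambda$ is precisely what guarantees $\theta^\ast>0$, so this choice is admissible. Substituting $\theta=\theta^\ast$ and writing $r:=x/\lambda>1$, the exponent becomes $-x\log(x/\lambda)+x-\lambda=-\lambda\,(r\log r-r+1)$, whence
\begin{equation} P(Y\geq x)\leq \exp\!\left[-\lambda\,k(x/\lambda)\right],\qquad k(r):=r\log r-r+1. \nonumber \end{equation}

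It then remains only to check that the constant $k(r)$ is strictly positive for $r>1$, which is elementary: $k(1)=0$, and $k'(r)=\log r>0$ for $r>1$, so $k$ is strictly increasing on $(1,\infty)$ and hence $k(r)>k(1)=0$ there. This delivers the claimed bound with a positive constant $k=k(x/\lambda)$ depending only on the ratio $x/\lambda$, as asserted. I do not anticipate any genuine obstacle here; the only point needing (minor) care is confirming that the optimizer $\theta^\ast$ of the Chernoff exponent lies in the admissible region $\theta>0$, which is exactly the content of the assumption $x>\lambda$. Alternatively, one could simply invoke a textbook Poisson concentration inequality, but the short self-contained computation above is cleaner.
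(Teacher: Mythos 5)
Your proof is correct and is essentially the same as the paper's: the "standard Poissonian tail estimate" $P(Y\geq x)\le e^{-\lambda}(e\lambda)^x/x^x$ that the paper quotes is precisely the optimized Chernoff bound you derive, and both arguments reduce to checking that $k(r)=r\log r-r+1>0$ for $r>1$ (the paper writes this in the variable $z=1/r$). You just make the Chernoff optimization explicit where the paper cites it as standard.
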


\begin{proof} Let $z:=\frac{\lambda}{x}\in (0,1)$. By the standard Poissonian tail estimate,
\begin{equation} \log P(Y\geq x)\leq \log\left[\frac{e^{-\lambda}(e\lambda)^x}{x^x}\right]=\lambda \left(-1+x\,\frac{1+\log \lambda}{\lambda}-x\log x/\lambda\right)=-\lambda k(z), \nonumber
\end{equation}
where $$k(z):=1-z^{-1}(1+\log z)>1-z^{-1}z=0,$$
as  $z>1+\log z$.
\end{proof}

\vspace{5mm}

\noindent \textbf{Proof of Lemma~\ref{lemma2}}. 

\noindent We prove the statement by an inductive argument as follows. Fix a single skeletal ancestral line. In this proof, by a doomed particle born `directly' along this skeletal line, we refer to a doomed particle whose direct ancestor is a skeleton particle of this line and by a `doomed subtree', we refer to a subtree that is initiated by a doomed particle born directly along this fixed skeletal line. Let $E_1$ be the event that the doomed subtrees created in the interval $I_1:=[0,t+\frac{4}{\beta m}\log t]$ do not all go extinct by time $t$ (recall that $m<0$) and let $P_1:=P(E_1)$. Let $F_1$ be the event that $\leq 2\beta t$ occurrences of branching occur along the skeletal line in the time interval $I_1$. Estimate 
\begin{equation} P_1\leq P(F_1^c)+P(E_1\mid F_1) \label{eq16}.
\end{equation}
By Proposition~\ref{proposition2}, since the number of occurrences of branching up to time $t$ along a single skeletal line is a Poisson process with mean $\beta t$, we have 
\begin{equation}
P(F_1^c)\leq e^{-k_1(1) t}, \label{eq161}
\end{equation}
where $k_1(1)>0$ is a constant that depends on $\beta$. Now focus on $P(E_1\mid F_1)$. Let $G_1$ be the event that $\leq t^3$ doomed subtrees are born in the interval $I_1$. Estimate
\begin{equation} P(E_1\mid F_1)\leq P(G_1^c\mid F_1)+P(E_1\mid G_1,F_1) \label{eq17}.
\end{equation} 
Let $\rho$ be the expected number of doomed offspring for a skeleton particle. (From \cite{OCE2017}, we know that $\rho=[f'(1)-f'(q)]q/(1-q)$, where $q$ is the probability of extinction for $Z$.) The first term on the right-hand side of (\ref{eq17}) is bounded from above by the probability that at least one skeletal branching among $2\beta t$ many gives at least $t^3/(2\beta t)=t^2/(2\beta)$ doomed offspring, which, by the union bound and Markov inequality, is bounded from above to yield
\begin{equation} P(G_1^c\mid F_1)\leq 2\beta t \frac{\rho}{t^2/(2\beta)}=k_2(1)/t, \label{eq171}
\end{equation}
where $k_2(1)$ is a constant that depends on $\beta$ and $f$. The second term on the right-hand side of (\ref{eq17}) is bounded from above by the probability that the doomed subtrees created in the interval $I_1$, of which there are at most $t^3$ many, do not all go extinct by $t$, which, by the union bound and Proposition~\ref{proposition1} (note that each doomed subtree is a subcritial BBM), is bounded from above to yield 
\begin{equation}
P(E_1\mid G_1,F_1)\leq t^3 \exp\left(\beta m \frac{-4}{\beta m}\log t\right)=1/t.  \label{eq172}
\end{equation}
Putting the pieces together, from (\ref{eq16})-(\ref{eq172}), we obtain
\begin{equation} P_1\leq e^{-k_1(1) t}+k_2(1)/t+1/t,\label{eq173}
\end{equation}
which implies that the doomed subtrees created in $I_1=[0,t+\frac{4}{\beta m}\log t]$ all go extinct by time $t$ with a probability tending to 1 as $t\rightarrow \infty$.

We now extend the argument above to the doomed subtrees created in the interval 
$I_n:=\left[t+\frac{4}{\beta m}\log^{(n-1)}t,t+\frac{4}{\beta m}\log^{(n)} t\right]$ for $n\geq 2$. For $n\geq 2$, let $E_n$ be the event that the doomed subtrees created in the interval $I_n$ do not all go extinct by time $t$ and let $P_n:=P(E_n)$. Let $F_n$ be the event that $\leq 2\beta \frac{-4}{\beta m}\log^{(n-1)}t=(-8/m)\log^{(n-1)}t $ occurrences of branching occur along the skeletal line in the time interval $I_n$. Estimate 
\begin{equation} P_n\leq P(F_n^c)+P(E_n\mid F_n) \label{eq18}.
\end{equation}  
By Proposition~\ref{proposition2}, since the number of occurrences of branching in $I_n$ along a single skeletal line is a Poisson process with mean $\leq (-4/m)\log^{(n-1)}t$, we have 
\begin{align} P(F_n^c) \leq
\begin{cases} & 1/t^{k_1(2)} \:,\: n=2,  \\
& 1/(\log^{(n-2)}t)^{k_1(n)} \:,\: n\geq 3,
\end{cases} \label{eq181}
\end{align}
where $k_1(n)>0$ is a constant that depends on $\beta$. Now focus on $P(E_n\mid F_n)$. Let $G_n$ be the event that $\leq (\log^{(n-1)}t)^3$ doomed subtrees are born in the interval $I_n$. Estimate
\begin{equation} P(E_n\mid F_n)\leq P(G_n^c\mid F_n)+P(E_n\mid G_n \cap F_n) \label{eq19}.
\end{equation} 
The first term on the right-hand side of (\ref{eq19}) is bounded from above by the probability that at least one skeletal branching among $(-8/m)\log^{(n-1)}t$ many gives at least \newline
$(\log^{(n-1)}t)^3/((-8/m)\log^{(n-1)}t)=(-m/8)(\log^{(n-1)}t)^2$ doomed offspring, which, by the union bound and Markov inequality, is bounded from above to yield
\begin{equation} P(G_n^c\mid F_n)\leq (-8/m)\log^{(n-1)}t\, \frac{\rho}{(-m/8)(\log^{(n-1)}t)^2}=k_2(n)/(\log^{(n-1)}t), \label{eq191}
\end{equation}
where $k_2(n)$ is a constant that depends on $f$. The second term on the right-hand side of (\ref{eq19}) is bounded from above by the probability that the doomed subtrees created in the interval $I_n$, of which there are at most $(\log^{(n-1)}t)^3$ many, do not all go extinct by $t$, which, by the union bound and Proposition~\ref{proposition1}, is bounded from above to yield
\begin{equation} P(E_n\mid G_n\cap F_n)\leq(\log^{(n-1)}t)^3 \exp\left(\beta m \frac{-4}{\beta m}\log^{(n)} t\right)=1/(\log^{(n-1)}t). \label{eq192}
\end{equation}
Then, from (\ref{eq18})-(\ref{eq192}), we obtain
\begin{align} P_n \leq
\begin{cases} & 1/t^{k_1(2)}+k_2(2)/\log t+1/\log t \: ,\: n=2,  \\
& 1/(\log^{(n-2)}t)^{k_1(n)}+k_2(n)/\log^{(n-1)}t+1/\log^{(n-1)}t \: ,\: n\geq 3.
\end{cases} \label{eq20}
\end{align}
This implies that the doomed subtrees produced by the skeletal line in $I_n$ have all gone extinct by time $t$ with a probability tending to 1 as $t\rightarrow \infty$. We recall that $I_1:=[0,t+\frac{4}{\beta m}\log t]$ and $I_n=[t+\frac{4}{\beta m}\log^{(n-1)}t,t+\frac{4}{\beta m}\log^{(n)} t]$ for $n\geq 2$ to conclude the following: for any $n\geq 1$, as $t\rightarrow\infty$,
\begin{equation} P(\text{doomed subtrees born in $[0,t-\log^{(n)} t]$ have all gone extinct by time $t$})\rightarrow 1. \label{eq21}
\end{equation}
The convergence in (\ref{eq21}) can easily be seen from (\ref{eq173}) and (\ref{eq20}) to be at least at the rate $1/t$ for $n=1$, and $1/(\log^{(n-1)} t)$ for $n\geq 2$. In view of (\ref{eq21}), since each doomed particle that is produced by the skeletal line is a member of a doomed subtree, each doomed particle present at time $t$ is a member of a doomed subtree that is created in the interval $[t-\log^{(n+1)} t,t]$ with probability tending to $1$ as $t\rightarrow\infty$. The result follows by applying similar bounds as above on the total progeny generated by the doomed subtrees produced along the skeletal line in the interval $[t-\log^{(n+1)} t,t]$; one just needs to multiply $\rho$ by the expected total progeny of a doomed subtree, which is finite as well. \qed

\section{Extension to the case $p_0>0$}\label{section6}

In this section, Theorem~\ref{theorem1} and Theorem~\ref{theorem2} are extended to the case $p_0>0$, where the probability of extinction for the BBM is positive. We condition the BBM on non-extinction $\mathcal{E}^c$ for meaningful results on optimal survival strategies. Recall that $Z$ has the offspring p.g.f.\ $f$, where $f(s)=\sum_{j=0}^\infty p_k s^k$ for $s\in[0,1]$. Suppose that $p_0>0$ and $\mu=f'(1)>1$. Let $Z=(Z^1,Z^2)$ be the decomposition of $Z$ into skeleton and doomed particles. Define $\alpha:=1-f'(q)$, which is the factor by which the branching rate is reduced for the skeleton, giving an effective branching rate of $\beta \alpha$. It is easy to see that if $p_0>0$ and $\mu>1$, then $0<\alpha<1$ (see \cite[Lemma 4]{OC2013}).

\begin{theorem}[Survival in a  uniform field; $d\geq 2$ and $p_0>0$ ] \label{theorem3}
Let $p_0>0$ and $\mu>1$. Suppose that $\text{d}\nu/\text{d}x=v$, $v>0$. Then, for $d\geq 2$, $0<\widehat\varepsilon<1$ and any $n\in\mathbb{N}$,
\begin{align} &\underset{t\rightarrow\infty}{\lim}(\mathbb{E}\times P)\left(|Z^1((1-\widehat\varepsilon)t)|= 1 \mid \mathsf{S_t} \right)=1, \label{eq210} \\
& \underset{t\rightarrow\infty}{\lim}(\mathbb{E}\times P)\left(|Z^2((1-\widehat\varepsilon)t)|\leq \log^{(n)} t \mid \mathsf{S_t} \right)=1. \label{eq211}
\end{align}
\end{theorem}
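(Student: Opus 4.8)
\textbf{Proof proposal for Theorem~\ref{theorem3}.}

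The plan is to reduce both statements to the $p_0=0$ setting via the skeleton decomposition, and then invoke Theorem~\ref{theorem1} together with Lemma~\ref{lemma2}. First I would recall that, conditioned on $\mathcal{E}^c$, the skeleton $Z^1$ is itself a BBM, but with branching rate $\beta\alpha$ and an offspring distribution supported on $\{2,3,\ldots\}$ (in particular $p_0^{(1)}=p_1^{(1)}=0$, since a skeleton particle has infinitely many descendants and, after absorbing the possibility of a single skeleton child into a non-branching event, at least two skeleton offspring at each genuine skeleton branching). Moreover the skeleton particles perform ordinary Brownian motions, so $Z^1$ is exactly the kind of process to which Theorem~\ref{theorem1} applies, with $\beta$ replaced by $\beta\alpha$ and $m$ replaced by the corresponding skeleton mean minus one. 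Since survival $\mathsf{S_t}$ requires in particular that no skeleton particle hits $K$ by time $t$, and since the annealed survival asymptotics in Theorem~A are $-\beta\alpha$ (this is precisely where the constant $\alpha$ enters (\ref{001})), the argument of Theorem~\ref{theorem1} transfers essentially verbatim: conditioned on $\mathsf{S_t}$, with overwhelming probability $|Z^1((1-\widehat\varepsilon)t)|=1$. This gives (\ref{eq210}).

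For (\ref{eq211}), the key point is that once we know the skeleton consists of a single particle up to time $(1-\widehat\varepsilon)t$ (by (\ref{eq210})), all doomed particles alive at time $(1-\widehat\varepsilon)t$ are produced by the unique skeletal line traversed up to that time. Then Lemma~\ref{lemma2}, applied with the relevant $n$ and with $t$ there replaced by $(1-\widehat\varepsilon)t$, tells us that the number of such doomed particles exceeds $\log^{(n)}((1-\widehat\varepsilon)t)$ with probability going to zero. Since $\log^{(n)}((1-\widehat\varepsilon)t)\leq \log^{(n)}(t)$ for large $t$, and since adding a constant factor inside the argument of iterated logs changes nothing asymptotically, we get the bound $\log^{(n)}t$. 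The slight subtlety is that Lemma~\ref{lemma2} is an unconditional statement about a free skeleton, whereas here we have conditioned on $\mathsf{S_t}$; but conditioning on $\mathsf{S_t}$ is — by (\ref{001}) — conditioning on an event of probability $e^{-\beta\alpha t + o(t)}$, only exponentially small, while Lemma~\ref{lemma2} furnishes only a $1/\log^{(n)}t$ decay, which is far too weak to survive dividing by $e^{-\beta\alpha t}$. So one cannot simply divide.

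The main obstacle is therefore exactly this mismatch of scales, and the fix is to run the bootstrap/estimate argument directly rather than conditioning crudely. Concretely, I would write, with $A_t:=\{|Z^2((1-\widehat\varepsilon)t)|\leq \log^{(n)}t\}$,
\begin{equation}
(\mathbb{E}\times P)(A_t^c\cap \mathsf{S_t}) \leq (\mathbb{E}\times P)\big(\{|Z^1((1-\widehat\varepsilon)t)|>1\}\cap\mathsf{S_t}\big) + (\mathbb{E}\times P)\big(A_t^c\cap\{|Z^1((1-\widehat\varepsilon)t)|=1\}\cap\mathsf{S_t}\big), \nonumber
\end{equation}
and divide by $(\mathbb{E}\times P)(\mathsf{S_t})$. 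The first term over $(\mathbb{E}\times P)(\mathsf{S_t})$ tends to $0$ by (\ref{eq210}) (in fact exponentially fast, per the final Remark of Section~\ref{section4}). For the second term, on the event $\{|Z^1((1-\widehat\varepsilon)t)|=1\}$ the doomed particles at time $(1-\widehat\varepsilon)t$ are produced by the single skeletal line; condition on the whole skeleton path and on the trap field. The doomed bushes are then conditionally independent of whether the skeleton avoids $K$, so the expected survival probability factorizes, and bounding the doomed-count probability by Lemma~\ref{lemma2} (which is of order $1/\log^{(n)}t\to0$ uniformly) while the skeleton's avoidance contributes the full $(\mathbb{E}\times P)(\mathsf{S_t})$-order cost, we get that this term divided by $(\mathbb{E}\times P)(\mathsf{S_t})$ is $O(1/\log^{(n)}t)\to 0$. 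Assembling the two pieces yields (\ref{eq211}). The one genuinely delicate step to write carefully is this conditional-independence/factorization of doomed bushes from the skeleton's trap-avoidance, which is where the structure of the skeleton decomposition (doomed subtrees hanging off a fixed skeleton, performing independent subcritical BBMs) is essential; everything else is a routine transcription of the $p_0=0$ proofs with $\beta\rightsquigarrow\beta\alpha$.
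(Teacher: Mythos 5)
Your proof of \eqref{eq210} matches the paper's: pass to the skeleton $Z^1$, which is a BBM with $p_0^{(1)}=p_1^{(1)}=0$ at rate $\beta\alpha$, and invoke Theorem~\ref{theorem1}.

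For \eqref{eq211}, you correctly identify the central danger — that Lemma~\ref{lemma2}'s $1/\log^{(n)}t$ bound cannot survive a naive division by the exponentially small $(\mathbb{E}\times P)(\mathsf{S_t})$ — but your proposed fix contains a genuine gap. After the factorization, your bound on $(\mathbb{E}\times P)\big(A_t^c\cap\{|Z^1|=1\}\cap\mathsf{S_t}\big)$ is at best
\[
P\big(A_t^c\,\big|\,|Z^1((1-\widehat\varepsilon)t)|=1\big)\cdot(\mathbb{E}\times P)\big(\{|Z^1((1-\widehat\varepsilon)t)|=1\}\cap\{\text{skeleton avoids }K\}\big),
\]
because $\mathsf{S_t}$ requires \emph{all} particles to avoid $K$, whereas your factorization only keeps the skeleton's avoidance. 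To finish, you now need the ratio
\[
\frac{(\mathbb{E}\times P)\big(\{|Z^1|=1\}\cap\{\text{skeleton avoids }K\}\big)}{(\mathbb{E}\times P)(\mathsf{S_t})}
\]
to be bounded (or at least $o(\log^{(n)}t)$). This is not automatic: the numerator only drops the requirement that the doomed bushes avoid $K$, so both quantities are $\exp[-\beta\alpha t+o(t)]$, and the two $o(t)$ corrections need not agree. Asserting that ``the skeleton's avoidance contributes the full $(\mathbb{E}\times P)(\mathsf{S_t})$-order cost'' silently conflates these two probabilities; a rigorous bound would require showing that, on $\{|Z^1|=1\}\cap\{\text{skeleton avoids }K\}$, the doomed particles also avoid $K$ with probability bounded away from $0$ (roughly, because subcritical bushes die quickly and hence stay near the skeleton), which is an additional estimate you have not given.

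The paper sidesteps this entirely with a monotonicity (stochastic domination) argument, namely \eqref{eq25}: for every fixed trap configuration $\omega$, conditioning the process on trap avoidance can only stochastically reduce the population size, so $P^\omega(|Z^2(t)|\le k\mid\mathsf{S_t},K_t)\ge P^\omega(|Z^2(t)|\le k\mid K_t)$. Integrating over $\omega$ and noting that the right-hand side does not depend on $\omega$, one obtains directly $(\mathbb{E}\times P)(L_t^c\mid\mathsf{S_t},K_t)\le P(L_t^c\mid K_t)$, and then Lemma~\ref{lemma2} finishes the proof. This drops the survival conditioning in one step and never requires any ratio comparison. Your approach could possibly be salvaged, but the missing ratio estimate is a real obstacle, and the paper's domination argument is both shorter and more robust.
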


\begin{proof} 

Let $(p^*_k)_{k\in\mathbb{N}_0}$ be the offspring probabilities for $Z^1$, i.e., the skeleton process. Then, (\ref{eq210}) follows from Theorem~\ref{theorem1}, since $p^*_0=0$, and one can adjust the branching rate of the skeleton (from $\beta$ to $\beta\alpha$) in order to make $p^*_1=0$.

To prove (\ref{eq211}), let $\widehat\varepsilon>0$, fix $n\in\mathbb{N}$, and define the events indexed by $t$ as 
\begin{equation}
K_t:=\left\{|Z^1((1-\widehat\varepsilon)t)|=1\right\}, \quad L_t:=\left\{|Z^2((1-\widehat\varepsilon)t)|\leq \log^{(n)} t\right\}. \nonumber
\end{equation}
Estimate
\begin{equation} (\mathbb{E}\times P)(L_t^c \mid \mathsf{S_t})\leq (\mathbb{E}\times P)(L_t^c \mid \mathsf{S_t},K_t)+(\mathbb{E}\times P)(K_t^c \mid \mathsf{S_t}) \label{eq24}.
\end{equation}
Note that the second term on the right-hand side of (\ref{eq24}) tends to zero by (\ref{eq210}). Now consider the first term. There is an obvious comparison between a BBM moving `freely' among the traps and a BBM conditioned to avoid the traps up to a certain time $t$. Conditioning on trap-avoiding tends to reduce the number of particles stochastically. Indeed, if we let $\Omega$ be the space of boundedly finite trap configurations, then an easy argument shows that for any $\omega\in\Omega$ and any $k\in\mathbb{N}$, 
\begin{equation} P^\omega(|Z(t)|\leq k\mid \mathsf{S_t})\geq P^\omega(|Z(t)|\leq k), \label{eq25}
\end{equation}
where $P^\omega$ is the law of the BBM conditioned to evolve in $\mathbb{R}^d$ with the trap configuration $\omega$ attached to it. Note that (\ref{eq25}) holds equally well if $Z$ is replaced by $Z^1$ or $Z^2$. Then, by (\ref{eq25}) and monotonicity of the integral, we may drop the conditioning on the first term on the right-hand side of (\ref{eq24}) and write
\begin{equation} (\mathbb{E}\times P)(L_t^c \mid \mathsf{S_t}, K_t)\leq P(L_t^c \mid K_t) \nonumber.
\end{equation}
It is clear that the presence of $K_t$ in the conditioning does not curb the validity of the inequality in (\ref{eq25}). Conditioned on $K_t$, there is exactly $1$ skeleton particle present at time $(1-\widehat\varepsilon)t$, which implies that there is exactly $1$ skeletal ancestral line up to that time. Hence, Lemma~\ref{lemma2} gives (\ref{eq211}).
\end{proof}

The proof of the following theorem is identical to that of the former; one only needs to replace $(1-\varepsilon)t$ by $(\eta^*-\varepsilon)t$ in the theorem statement and its proof.

\begin{theorem}[Survival in a radially decaying field; $d\geq 1$ and $p_0>0$] \label{theorem4}
Let $p_0>0$ and $\mu>1$. Let the trap intensity be radially decaying as in Theorem~\ref{theorem2}. For $n\in\mathbb{N}$ let $\log^{(n)} t$ be defined as before. Then for $d\geq 1$, $l>l_{cr}$, $0<\widehat\varepsilon<\eta^*$ and any $n\in\mathbb{N}$,
\begin{align} &\underset{t\rightarrow\infty}{\lim}(\mathbb{E}\times P)\left(|Z^1((\eta^*-\widehat\varepsilon)t)|=1\mid \mathsf{S_t}\right)=1,  \nonumber \\
& \underset{t\rightarrow\infty}{\lim}(\mathbb{E}\times P)\left(|Z^2((\eta^*-\widehat\varepsilon)t)|\leq \log^{(n)} t \mid \mathsf{S_t}\right)=1. \nonumber
\end{align}
\end{theorem}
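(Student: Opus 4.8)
The plan is to reduce Theorem~\ref{theorem4} to the already-established Theorem~\ref{theorem2} and Lemma~\ref{lemma2} exactly as Theorem~\ref{theorem3} was reduced to Theorem~\ref{theorem1} and Lemma~\ref{lemma2}, with the only change being that the relevant time at which branching is suppressed is $(\eta^*-\widehat\varepsilon)t$ instead of $(1-\widehat\varepsilon)t$. First I would recall the skeleton/doomed decomposition $Z=(Z^1,Z^2)$ conditioned on $\mathcal{E}^c$: the skeleton process $Z^1$ is itself a BBM, now with effective branching rate $\beta\alpha$ (where $\alpha=1-f'(q)$) and offspring law $(p^*_k)_{k\in\mathbb{N}_0}$ satisfying $p^*_0=0$, and after absorbing the possibly nonzero $p^*_1$ into the branching rate we may assume $p^*_1=0$. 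Since $Z^1$ is a supercritical BBM with $p^*_0=p^*_1=0$ living in exactly the radially decaying trap field of Theorem~\ref{theorem2}, and since survival $\mathsf{S_t}$ is determined by the range of $Z$ (equivalently, of the skeleton together with its doomed bushes — but the doomed particles that are alive at $t$ are negligible in number by Lemma~\ref{lemma2} and, more to the point, the trap-avoidance event for $Z$ implies trap-avoidance for $Z^1$), Theorem~\ref{theorem2} applies to $Z^1$ and yields
\begin{equation}
\underset{t\rightarrow\infty}{\lim}(\mathbb{E}\times P)\left(|Z^1((\eta^*-\widehat\varepsilon)t)|=1\mid \mathsf{S_t}\right)=1. \nonumber
\end{equation}
Here one should note that the minimizer $\eta^*$ and the critical intensity $l_{cr}$ are unchanged under the rate rescaling because the variational problem \eqref{I} is stated in terms that are invariant under the skeleton construction (this is precisely how Theorem~\ref{theorem2} was set up in \cite{OCE2017}); this invariance is the one point requiring a sentence of justification.

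Next I would handle the doomed particles. Fix $n\in\mathbb{N}$ and set $K_t:=\{|Z^1((\eta^*-\widehat\varepsilon)t)|=1\}$ and $L_t:=\{|Z^2((\eta^*-\widehat\varepsilon)t)|\leq \log^{(n)}t\}$, and estimate
\begin{equation}
(\mathbb{E}\times P)(L_t^c\mid \mathsf{S_t})\leq (\mathbb{E}\times P)(L_t^c\mid \mathsf{S_t},K_t)+(\mathbb{E}\times P)(K_t^c\mid \mathsf{S_t}). \nonumber
\end{equation}
The second term tends to $0$ by the skeleton statement just proved. For the first term, I would invoke the stochastic-domination inequality \eqref{eq25} — valid for $Z^2$ in place of $Z$, and with $K_t$ added to the conditioning without harm, exactly as argued in the proof of Theorem~\ref{theorem3} — to drop the conditioning on $\mathsf{S_t}$ and bound $(\mathbb{E}\times P)(L_t^c\mid \mathsf{S_t},K_t)\leq P(L_t^c\mid K_t)$. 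Conditioned on $K_t$ there is a single skeletal ancestral line up to time $(\eta^*-\widehat\varepsilon)t$, so every doomed particle alive at that time is produced by this one skeletal line, and Lemma~\ref{lemma2} (applied with time horizon $(\eta^*-\widehat\varepsilon)t$, which is a constant multiple of $t$ and hence does not affect the iterated-log rate) gives $P(L_t^c\mid K_t)\to 0$. Combining, $(\mathbb{E}\times P)(L_t^c\mid \mathsf{S_t})\to 0$, which is the second display in the theorem.

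The main obstacle, such as it is, is not a hard estimate but the bookkeeping of the two reductions: one must be sure that (i) $\mathsf{S_t}$ really is an event that lets Theorem~\ref{theorem2} be applied verbatim to the skeleton (it does, because trap-avoidance by $Z$ forces trap-avoidance by $Z^1$, and conversely the survival asymptotics of the annealed probability conditioned on $\mathcal{E}^c$ are governed by the skeleton, which is how Theorems A and B were proved), and (ii) the constants $\eta^*$, $l_{cr}$ are the same objects in the $p_0>0$ and $p_0=0$ settings once the effective rate $\beta\alpha$ is used — this is inherited from \cite{OCE2017}. Everything else is a transcription of the proof of Theorem~\ref{theorem3} with $(1-\widehat\varepsilon)t$ replaced throughout by $(\eta^*-\widehat\varepsilon)t$, and with the additional hypothesis $l>l_{cr}$ carried along so that branching is genuinely suppressed on $[0,\eta^*t]$ (if $l<l_{cr}$ the system branches freely and no such statement holds, which is why the hypothesis appears).
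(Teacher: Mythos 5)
Your proposal is correct and follows exactly the paper's route: the paper dispenses with Theorem~\ref{theorem4} in one sentence, declaring it identical to the proof of Theorem~\ref{theorem3} with $(1-\widehat\varepsilon)t$ replaced by $(\eta^*-\widehat\varepsilon)t$, which is precisely the transcription you carry out — applying Theorem~\ref{theorem2} to the skeleton $Z^1$ (a BBM at rate $\beta\alpha$ with $p^*_0=0$, after absorbing $p^*_1$) for the first display, and then using the events $K_t,L_t$, the stochastic-domination inequality \eqref{eq25}, and Lemma~\ref{lemma2} for the second. Your added remarks (that $\mathsf{S_t}\subseteq\{T^1>t\}$ lets the bound from Theorem~\ref{theorem2} transfer to the conditioning on $\mathsf{S_t}$, and that the iterated-log rate in Lemma~\ref{lemma2} is unaffected by passing from horizon $t$ to $(\eta^*-\widehat\varepsilon)t$) flag exactly the spots the paper leaves implicit, and are correct.
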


\section{Corollaries: different types of optimal survival strategies}\label{section7}

In this section, using our results on the optimal survival strategies regarding the population size, namely Theorem~\ref{theorem3} and Theorem~\ref{theorem4}, we prove optimal survival results regarding the range of the BBM, and the size and position of the clearings in $\mathbb{R}^d$ as corollaries. Our proofs are in the same spirit as the ones for \cite[Thm.1.3(i)-(iv)]{E2003}. We emphasize that our results concerning the population size were all about suppressing the branching given survival among traps up to time $t$. Hence, the corollaries below arise in cases where there is some suppression of branching. For instance, when the trap intensity is uniform and $d=1$, in the case $l<l_{cr}$, the system does not need to suppress branching in order to avoid traps; hence this case is not studied below. Recall that $R=(R(t))_{t\geq 0}$ is the range process for the BBM.

\begin{corollary} [$d=1$]\label{cor1}
Let the trap intensity be uniform. If $Z$ is supercritical, then for $d=1$, $l>l_{cr}$ and $\varepsilon>0$,
\begin{align} &\underset{t\rightarrow\infty}{\lim}(\mathbb{E}\times P)\left(R(t)\subseteq B(0,\varepsilon t)\mid \mathsf{S_t} \right)=1, \label{eq27} \\
& \underset{t\rightarrow\infty}{\lim}(\mathbb{E}\times P)\left(B(0,\varepsilon t)\cap K\neq\emptyset \mid \mathsf{S_t} \right)=1. \label{eq28}
\end{align}
\end{corollary}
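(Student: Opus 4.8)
The plan is to derive Corollary~\ref{cor1} from Theorem~\ref{theorem4} with $d=1$, $l>l_{cr}$ (for which $\eta^*=1$), exploiting the fact that once branching is suppressed essentially all the way up to time $t$, the whole trajectory of the BBM is just that of a single Brownian particle (together with a few doomed bushes), and a single Brownian particle cannot travel a macroscopic distance. Fix $\varepsilon>0$ and fix $\widehat\varepsilon$ small (to be sent to $0$ at the end). Theorem~\ref{theorem4} with $n=1$ tells us that, conditioned on $\mathsf{S_t}$, with overwhelming probability $|Z^1((1-\widehat\varepsilon)t)|=1$ and $|Z^2((1-\widehat\varepsilon)t)|\le\log t$; Lemma~\ref{lemma2} moreover controls the doomed bushes produced along the unique skeletal line. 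So on a set of conditional probability tending to $1$, the configuration at time $(1-\widehat\varepsilon)t$ consists of one skeleton particle at some point $X$ and at most $\log t$ doomed particles, all within the range traced out by a single skeletal ancestral line up to time $(1-\widehat\varepsilon)t$.

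The first step is to show that this unique skeletal line stays inside $B(0,(\varepsilon/2)t)$ up to time $(1-\widehat\varepsilon)t$, conditioned on $\mathsf{S_t}$. This is a standard displacement estimate of the type already used in the paper: the skeletal line is a Brownian path run for time $\le t$, so by \cite[Lemma 5]{OCE2017} the (unconditional) probability that it exits $B(0,(\varepsilon/2)t)$ is $\exp[-(\varepsilon^2/8)t+o(t)]$, which is superexponentially beaten by nothing — but it \emph{is} exponentially smaller than $(\mathbb{E}\times P)(\mathsf{S_t})=\exp[-\beta t+o(t)]$ provided $\varepsilon$ is not too small; since $\varepsilon$ is fixed we simply note that for small $\varepsilon$ this does \emph{not} immediately work, and instead argue as in the bootstrap/Lemma~\ref{lemma1} philosophy: condition on the skeleton line exiting the ball and show the resulting survival probability is of strictly lower exponential order, using that forcing a macroscopic Brownian excursion costs a positive exponential rate \emph{on top of} the $\beta t$ cost of suppressing branching, or alternatively observe that the skeletal line must also clear its own subcritical ball so Lemma~\ref{lemma1} applies. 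Either way one gets that, conditioned on $\mathsf{S_t}$, the skeletal line (hence all skeleton particles) lies in $B(0,(\varepsilon/2)t)$ up to $(1-\widehat\varepsilon)t$ with probability $\to1$.

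Next I would handle the two remaining time windows and the doomed particles. For the final window $[(1-\widehat\varepsilon)t,t]$: starting from the single skeleton particle at $X\in B(0,(\varepsilon/2)t)$, the BBM runs freely for time $\widehat\varepsilon t$, and by the standard speed result (cf.\ \cite{MK1975}, as recalled after the statement of Lemma~\ref{lemma1}) its range grows by at most $\sqrt{2\beta m}\,\widehat\varepsilon t\,(1+o(1))$ with overwhelming probability; choosing $\widehat\varepsilon$ small enough that $\sqrt{2\beta m}\,\widehat\varepsilon<\varepsilon/2$ keeps everything inside $B(0,\varepsilon t)$. The doomed particles alive before $(1-\widehat\varepsilon)t$ sit on bushes hanging off the skeletal line, each bush being a subcritical BBM of finite expected total progeny; by Lemma~\ref{lemma2} (and the displacement estimate applied bush-by-bush, there being $\le\log t$ of them and each run for time $\le t$) they too stay in $B(0,(\varepsilon/2)t)$ with conditional probability $\to1$. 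Combining, $R(t)\subseteq B(0,\varepsilon t)$ with conditional probability $\to1$, which is \eqref{eq27}.

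Finally, \eqref{eq28} follows from \eqref{eq27} together with Lemma~\ref{lemma1}: if $B(0,\varepsilon t)$ were trap-free — i.e., $\mathrm{supp}(\Pi)\cap B(0,\varepsilon t)=\emptyset$ — then in particular $\mathrm{supp}(\Pi)\cap\bar B(0,\rho_t)=\emptyset$ for $\rho_t=(1-\varepsilon')\sqrt{2\beta m}\,t$ with $\varepsilon'$ chosen so that $(1-\varepsilon')\sqrt{2\beta m}<\varepsilon$, so on the complement of this trap-free event Lemma~\ref{lemma1} gives $P(\mathsf{S_t})\le\exp[-\beta\varepsilon'(\sqrt{m^2+m}-m)t]$, exponentially small; comparing this with $(\mathbb{E}\times P)(\mathsf{S_t})=\exp[-\beta t+o(t)]$ is not by itself decisive, so I would instead split $(\mathbb{E}\times P)(\mathsf{S_t}\cap\{B(0,\varepsilon t)\cap K=\emptyset\})$ by conditioning on the trap-free event and applying Lemma~\ref{lemma1} on its complement is vacuous — rather, conditioned on $\{B(0,\varepsilon t)\cap K=\emptyset\}$ \emph{and} on $\mathsf{S_t}$, \eqref{eq27} still forces the BBM into $B(0,\varepsilon t)$, so it survives trivially and incurs \emph{no} trap cost at all, only the branching-suppression cost; but the $I=\beta$ rate in Theorem~\ref{theorem2}/\cite[Thm.2.2]{OCE2017} is achieved precisely by a strategy that \emph{does} pay for clearing, contradicting optimality. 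Making this last contradiction precise — i.e., showing that conditioning on a large clearing at the origin would give a strictly better-than-optimal rate, hence has conditional probability $\to0$ — is the main obstacle; the cleanest route is to redo the variational computation of \cite[Sect.3.2]{E2003}/\cite[Thm.1]{OCE2017} restricted to configurations with $R(t)\subseteq B(0,\varepsilon t)$ and verify that the minimal cost there exceeds $\beta$ for small $\varepsilon$, then invoke the bootstrap argument from Section~\ref{section4} with $A_t=\mathsf{S_t}$-optimal strategy and $B_t=\{B(0,\varepsilon t)\cap K=\emptyset\}$.
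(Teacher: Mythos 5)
Your proposal for \eqref{eq27} is headed in the right direction: the decisive observation (which the paper also makes) is that the branching--suppression cost and the Brownian--displacement cost are paid \emph{independently}, so that conditioning on a macroscopic excursion of the skeletal line degrades the survival rate strictly below $\beta\alpha$. However, there are gaps. First, controlling the doomed particles via Theorem~\ref{theorem4}/Lemma~\ref{lemma2} only bounds the number of doomed particles \emph{alive} at a single time $(1-\widehat\varepsilon)t$; since $R(t)$ is a union over all of $[0,t]$, you must control the \emph{total} doomed progeny $|\mathcal{Z}^2|$ (every bush that ever existed contributes to the range). The paper does this with a Markov inequality giving $|\mathcal{Z}^2((1-\varepsilon')t)|\leq e^{\delta t}$ with high enough probability; your ``$\leq\log t$ bushes'' count is simply wrong (the number of bushes hanging off one skeletal line is of order $t$). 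Second, the survival rate for a general supercritical process with $p_0>0$ is $\beta\alpha$, not $\beta$; the proof needs this distinction (e.g., the inequality $3m>\alpha$ enters the paper's treatment of the sub-BBMs in the final window). Third, $\widehat\varepsilon$ should be chosen small depending on $\varepsilon$ and then held fixed, not ``sent to $0$ at the end''.

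For \eqref{eq28}, you explicitly acknowledge that the crucial step --- showing a large clearing at the origin is too expensive given survival --- is ``the main obstacle,'' so the proposal is not a proof. Your idea of redoing the variational computation with the constraint $R(t)\subseteq B(0,\varepsilon t)$ is both overcomplicated and not clearly correct, since a cleared $\varepsilon t$-ball also relieves branching suppression, making the trade-off delicate. The paper's argument is far simpler and avoids the variational problem entirely: take $0<\varepsilon'<\varepsilon$ and let $D_t:=\{R(t)\subseteq B(0,\varepsilon' t)\}$, which has high conditional probability by \eqref{eq27}. On $D_t$, the event that $B(0,\varepsilon' t+r)$ is trap-free already implies $\mathsf{S_t}$, so $(\mathbb{E}\times P)(\{B(0,\varepsilon' t+r)\cap K=\emptyset\}\cap D_t)\leq(\mathbb{E}\times P)(\mathsf{S_t})$. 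But by the Poisson structure and independence of $\Pi$ from $Z$, clearing the strictly larger ball $B(0,\varepsilon t)$ is \emph{exponentially more expensive} than clearing $B(0,\varepsilon' t+r)$ (the two differ by a positive linear exponential factor). Hence $(\mathbb{E}\times P)(\{B(0,\varepsilon t)\cap K=\emptyset\}\cap D_t)$ is strictly lower exponential order than $(\mathbb{E}\times P)(\mathsf{S_t})$, and \eqref{eq28} follows. This monotone-comparison step is the idea missing from your proposal.
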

\noindent{\bf Note:} Regarding \eqref{eq28}, at the first sight, it may seem counterintuitive that trap avoidance implies the presence (and not the lack) of traps anywhere. However, for example in the $p_0=0$ case, the correct intuition is as follows: by Theorem \ref{theorem2}, given survival, the system only produces a single particle with overwhelming probability, and this single particle will most likely be close to the origin. Therefore, creating clearings further away from the origin would result in an unnecessary probabilistic cost.

\begin{proof} We prove the two displayed formulas separately.

{\bf (a) Proof of \eqref{eq27}:} Let $\varepsilon>0$ be fixed and let $|\mathcal{Z}(t)|=(|\mathcal{Z}^1(t)|,|\mathcal{Z}^2(t))|$ be the decomposition of the \textit{total progeny} for the BBM up to time $t$ for $t\geq 0$. Let $Z=(Z^1,Z^2)$ be the decomposition of $Z$ as before. From \cite[Thm.2.2]{OCE2017}, we know that $\eta^*=1$ when $d=1$. Let $0<\varepsilon'<1$ and $\delta>0$, which both will depend on $\varepsilon$ later. Define the events indexed by $t$ as 
\begin{equation}\mathcal{L}_t:=\left\{|\mathcal{Z}^2((1-\varepsilon')t)|\leq e^{\delta t}\right\},\quad K_t:=\left\{|Z^1((1-\varepsilon')t)|=1\right\}, \nonumber
\end{equation}
and
\begin{equation}
F_t:=\left\{R(t)\subseteq B(0,\varepsilon t)\right\}. \nonumber
\end{equation}
It is enough to show that
\begin{equation} \underset{t\rightarrow\infty}{\limsup}\,\frac{1}{t}\log (\mathbb{E}\times P)(F_t^c\cap \mathsf{S_t})<-I, \nonumber
\end{equation}
where $I=\beta \alpha$ (see \cite[Thm.1]{O2016}). Estimate
\begin{equation} (\mathbb{E}\times P)(F_t^c \cap \mathsf{S_t})\leq P(F_t^c\cap \mathcal{L}_t\cap K_t)+(\mathbb{E}\times P)(K_t^c \cap \mathsf{S_t} )+P(\mathcal{L}_t^c \cap K_t) \label{eq29}.
\end{equation}
The second term on the right-hand side of \eqref{eq29} is lower order than $\exp[-It]$ on an exponential scale as $t\rightarrow\infty$ by the proof of Theorem~\ref{theorem1}. The third term can be written as $P(\mathcal{L}_t^c \mid K_t)P(K_t)$. Similarly to the argument leading to \eqref{eq3}-\eqref{eq5}, one can show that $P(K_t)=\exp[-\beta\alpha t+o(t)]$ since the effective branching rate for the skeleton is $\beta\alpha$. Now consider $P(\mathcal{L}_t^c \mid K_t)$. Conditioned on $K_t$, since there is only one skeleton particle present at time $(1-\varepsilon')t$ and the expected number of occurrences of branching along its skeletal line is $\beta (1-\varepsilon') t$ up to time $(1-\varepsilon')t$, we have 
\[E[\,|\mathcal{Z}^2((1-\varepsilon')t)|\mid K_t]=\kappa \beta (1-\varepsilon') t,  \]   
where $\kappa>0$ is the product of the expected total progeny of a doomed subtree and the expected doomed offspring of a skeleton particle, which are both finite and don't depend on $t$. Then, Markov inequality implies that $P(\mathcal{L}_t^c \mid K_t)\leq \exp[-\delta t+o(t)]$ so that $P(\mathcal{L}_t^c \mid K_t)P(K_t)$ is lower order than $\exp[-It]$ on an exponential scale. It remains to show that 
\begin{equation} \underset{t\rightarrow\infty}{\limsup}\,\frac{1}{t}\log P(F_t^c\cap \mathcal{L}_t\cap K_t)<-I. \nonumber
\end{equation}
Define the following events: 
\begin{align} 
F_t^1:=&\left\{R((1-\varepsilon')t)\subseteq B(0,\varepsilon t/2)\right\}, \nonumber \\
F_t^2:=&\bigr\{\text{each sub-BBM emanating from one of the `parent' particles at} \nonumber \\ 
&\ \text{time $(1-\varepsilon')t$ is contained in an $\varepsilon t/2$-ball around the position} \nonumber \\
&\ \text{of the parent particle.} \bigl\} \nonumber
\end{align}
It is clear that $F_t^1 \cap F_t^2\subseteq F_t$. Therefore, using de Morgan's law, followed by the union bound, it suffices to show the following two inequalities:
\begin{align} &\underset{t\rightarrow\infty}{\limsup}\,\frac{1}{t}\log P((F_t^1)^c\cap \mathcal{L}_t\cap K_t)<-I, \label{eq31} \\
&\underset{t\rightarrow\infty}{\limsup}\,\frac{1}{t}\,\log P((F_t^2)^c\cap \mathcal{L}_t\cap K_t)<-I. \label{eq32}
\end{align}
On the event $(F_t^1)^c\cap \mathcal{L}_t\cap K_t$, the following probabilistic costs arise: The system has only $1$ skeleton particle throughout the time interval $[0,(1-\varepsilon')t]$, which has probability $\exp[-\beta \alpha (1-\varepsilon')t]$. Also, at least one Brownian path must go outside $B(0,\varepsilon t/2)$ for some $s\in[0,(1-\varepsilon')t]$, which has probability at most $\exp\left[-\varepsilon^2/[8(1-\varepsilon')t]+\delta t+o(t)\right]$ by \cite[Lemma 5]{OCE2017} and the union bound since on the event $\mathcal{L}_t\cap K_t$, there are at most $\exp[\delta t+o(t)]$ particles in the system at all times in the period $[0,(1-\varepsilon')t]$. Therefore, by independence of branching and motion mechanisms, we obtain
\begin{equation} \underset{t\rightarrow\infty}{\limsup}\,\frac{1}{t}\log P((F_t^1)^c\cap \mathcal{L}_t\cap K_t)\leq -\beta \alpha (1-\varepsilon')-\frac{\varepsilon^2}{8(1-\varepsilon')}+\delta. \nonumber
\end{equation} 
Then, since $I=\beta\alpha$ when $d=1$, $l>l_{cr}$; to prove \eqref{eq31}, it suffices to choose $\varepsilon'>0$ such that the inequality
\begin{equation} \beta \alpha (1-\varepsilon')+\frac{\varepsilon^2}{8(1-\varepsilon')}>\beta\alpha+\delta \label{eq33}
\end{equation}
is satisfied.

Now consider the event $(F_t^2)^c\cap \mathcal{L}_t\cap K_t$. On the event $(F_t^2)^c$, at least one sub-BBM emanating from one of the `parent' particles at time $(1-\varepsilon')t$ must escape its $\varepsilon t/2$-ball around the position of the parent particle. Fix one such sub-BBM. By the proof of Proposition 1 in \cite{OCE2017}, an argument similar to the one leading to \eqref{eq7} shows that if 
\begin{equation}
\varepsilon/2>2\varepsilon'\sqrt{2\beta m} \label{eq34},
\end{equation}
then the probability that this sub-BBM exits a $\varepsilon t/2$-ball around the position of the parent particle in the remaining time $\varepsilon't$ is at most $\exp[-3\beta m \varepsilon' t+o(t)]$. Since $\mathcal{L}_t\cap K_t$ implies the existence of at most $\exp[\delta t+o(t)]$ many particles at time $(1-\varepsilon')t$, this introduces a factor of at most $\delta t$ to the exponent in the latter estimate. Again, by independence of branching and motion, we obtain
\begin{equation} \underset{t\rightarrow\infty}{\limsup}\,\frac{1}{t}\log P((F_t^2)^c\cap \mathcal{L}_t\cap K_t)\leq -\beta \alpha (1-\varepsilon')-3\beta m \varepsilon'+\delta< -\beta\alpha,  \nonumber
\end{equation} 
provided that $\delta$ is small enough, where the last inequality follows since $3m>\alpha$. (Recall that the BBM is supercritical, which means $m>1$, whereas $\alpha\leq 1$.) Finally, to satisfy \eqref{eq33} and \eqref{eq34}, and hence to complete the proof of \eqref{eq27}, choose $\delta$ small enough and $\varepsilon'=\min\left\{\varepsilon^2/(8\beta \alpha),\varepsilon/(4\sqrt{2\beta m})\right\}$.

\medskip
{\bf (b) Proof of \eqref{eq28}:} Let $0<\varepsilon'<\varepsilon$, and define the events indexed by $t$ as 
\begin{equation}
D_t:=\left\{R(t)\subseteq B(0,\varepsilon' t)\right\}, \quad G_t^1:=\left\{B(0,\varepsilon t)\cap K\neq\emptyset\right\}, \quad G_t^2:=\left\{B(0,\varepsilon' t+r)\cap K\neq\emptyset\right\}. \nonumber
\end{equation}
(Recall that $r$ is the constant trap radius.) It is clear that $(G_t^1)^c\subset(G_t^2)^c$, and by the definition of Poisson random measure, the probabilities of $(G_t^2)^c$ and $(G_t^1)^c$ differ by $\varepsilon-\varepsilon'$ on an exponential scale. Estimate
\begin{equation} (\mathbb{E}\times P)((G_t^1)^c\cap\mathsf{S_t})\leq(\mathbb{E}\times P)((G_t^1)^c\cap D_t)+(\mathbb{E}\times P)(\mathsf{S_t}\cap D_t^c). \label{eq36}
\end{equation} 
The second term on the right-hand side of \eqref{eq36} is lower order than $\exp[-It]$ on an exponential scale as $t\rightarrow\infty$, since it was shown previously that each term on the right-hand side of \eqref{eq29} is such. The first term has the following asymptotics:
\begin{align} \underset{t\rightarrow\infty}{\limsup}\,\frac{1}{t}\log(\mathbb{E}\times P)((G_t^1)^c\cap D_t)=&\:\underset{t\rightarrow\infty}{\limsup}\,\frac{1}{t}\log\left[\mathbb{P}(G_t^1)^c P(D_t)\right] \nonumber \\
<&\:\underset{t\rightarrow\infty}{\limsup}\,\frac{1}{t}\log\left[\mathbb{P}(G_t^2)^c P(D_t)\right] \nonumber \\
=&\:\underset{t\rightarrow\infty}{\limsup}\,\frac{1}{t}\log(\mathbb{E}\times P)((G_t^2)^c\cap D_t)\leq -I, \nonumber
\end{align} 
where the first inequality follows from the fact that the probabilities of $(G_t^2)^c$ and $(G_t^1)^c$ differ by $\varepsilon-\varepsilon'$ on an exponential scale, and the last inequality follows since $(G_t^2)^c\cap D_t\subseteq\mathsf{S_t}$. This completes the proof of \eqref{eq28}.  
\end{proof}

\begin{corollary}[$d\ge 2$]\label{cor2}
Let the trap intensity be uniform. If $Z$ is supercritical, then for $d\geq 2$ and $\varepsilon>0$,
\begin{align} 
&\underset{t\rightarrow\infty}{\lim}(\mathbb{E}\times P)\left(R(t)\subseteq B(0,\varepsilon t)\mid \mathsf{S_t} \right)=1, \label{eq281} \\
& \underset{t\rightarrow\infty}{\lim}(\mathbb{E}\times P)\left(B(0,\varepsilon t^{1/d})\cap K\neq\emptyset \mid \mathsf{S_t}\right)=1. \label{eq282}
\end{align}
\end{corollary}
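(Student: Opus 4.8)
The plan is to derive both displays from the population-size results of Section~\ref{section6}, handling \eqref{eq281} exactly as \eqref{eq27} was handled and \eqref{eq282} by a trap-field splitting argument that differs genuinely from the one-dimensional case. Throughout, the controlling decay rate in a uniform field with $d\ge 2$ is $I=\beta\alpha$ (Theorem~A; the skeleton branches at effective rate $\beta\alpha$ when $p_0>0$), and the tools used — the Brownian-displacement estimate \cite[Lemma~5]{OCE2017}, the identity $E|Z(s)|=e^{\beta m s}$, and the spreading bound behind \eqref{eq7} — are dimension-free. By the skeleton decomposition I may argue for $p_0=0$ with $\beta$ in place of $\beta\alpha$; the passage to $p_0>0$ is exactly as in the deduction of Theorem~\ref{theorem3} from Theorem~\ref{theorem1}, so I suppress it.

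For \eqref{eq281} I would transcribe the proof of \eqref{eq27}. Fix $\varepsilon>0$, split $[0,t]$ at $(1-\varepsilon')t$, and set $K_t:=\{|Z^1((1-\varepsilon')t)|=1\}$, $\mathcal L_t:=\{|\mathcal Z^2((1-\varepsilon')t)|\le e^{\delta t}\}$ and $F_t:=\{R(t)\subseteq B(0,\varepsilon t)\}$. Estimating $(\mathbb E\times P)(F_t^c\cap\mathsf S_t)\le P(F_t^c\cap\mathcal L_t\cap K_t)+(\mathbb E\times P)(K_t^c\cap\mathsf S_t)+P(\mathcal L_t^c\mid K_t)P(K_t)$, the second term is of lower exponential order than $e^{-\beta\alpha t}$ by the (exponential-rate) proof of Theorem~\ref{theorem3}, and the third because $P(K_t)=e^{-\beta\alpha t+o(t)}$ while $E[|\mathcal Z^2((1-\varepsilon')t)|\mid K_t]$ is linear in $t$, so Markov gives $P(\mathcal L_t^c\mid K_t)\le e^{-\delta t+o(t)}$. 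For the first term, $F_t\supseteq F_t^1\cap F_t^2$ with $F_t^1=\{R((1-\varepsilon')t)\subseteq B(0,\varepsilon t/2)\}$ and $F_t^2$ the event that each sub-BBM emitted at $(1-\varepsilon')t$ stays in the $\varepsilon t/2$-ball about its root; on $\mathcal L_t\cap K_t$ — which caps the number of Brownian paths present before $(1-\varepsilon')t$ at $e^{\delta t+o(t)}$ — the costs of $(F_t^1)^c$ and $(F_t^2)^c$ are $e^{-\beta\alpha(1-\varepsilon')t-\varepsilon^2t/(8(1-\varepsilon'))+\delta t+o(t)}$ and $e^{-\beta\alpha(1-\varepsilon')t-3\beta m\varepsilon' t+\delta t+o(t)}$, by \cite[Lemma~5]{OCE2017} and the argument behind \eqref{eq7}. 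Choosing $\varepsilon',\delta$ as in the proof of \eqref{eq27} (so that \eqref{eq33} and \eqref{eq34} hold; this uses $3m>\alpha$, valid since $m>1\ge\alpha$) makes both exponents strictly below $-\beta\alpha$, giving $\limsup_{t\to\infty}\frac1t\log(\mathbb E\times P)(F_t^c\cap\mathsf S_t)<-\beta\alpha$ and hence \eqref{eq281} by \eqref{001}.

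For \eqref{eq282} the scheme of \eqref{eq28} does not transfer, because the confinement $R(t)\subseteq B(0,\varepsilon' t^{1/d})$ at the sharp scale $t^{1/d}$ is \emph{not} an optimal survival strategy once $d\ge3$ (the surviving particle can reach radius $\varepsilon' t^{1/d}$ along a thin trap-free corridor at subexponential cost). Instead I would exploit independence of $\Pi$ over disjoint regions. Split $\Pi=\Pi_1+\Pi_2$, with $\Pi_1$ the restriction of $\Pi$ to $B(0,\varepsilon t^{1/d}+r)$ and $\Pi_2$ its restriction to the complement; these are independent, and $(G_t^1)^c=\{\Pi_1=0\}$. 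On $(G_t^1)^c$ the trap field coincides with the one built from $\Pi_2$ alone, so $(G_t^1)^c\cap\mathsf S_t=(G_t^1)^c\cap\mathsf S^{(2)}_t$, where $\mathsf S^{(2)}_t$ denotes survival up to $t$ relative to the $\Pi_2$-traps; as $\mathsf S^{(2)}_t$ depends only on $\Pi_2$ and on $Z$, it is independent of $(G_t^1)^c$, whence $(\mathbb E\times P)((G_t^1)^c\cap\mathsf S_t)=\mathbb P((G_t^1)^c)\,(\mathbb E\times P)(\mathsf S^{(2)}_t)=e^{-vc_d\varepsilon^d t+o(t)}\,(\mathbb E\times P)(\mathsf S^{(2)}_t)$, with $c_d>0$ the volume of the unit ball in $\mathbb R^d$ — and it is here that the exponent $t^{1/d}$ is forced, the ball $B(0,\varepsilon t^{1/d})$ having Lebesgue measure $c_d\varepsilon^d t$, linear in $t$. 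Granting the key estimate $\limsup_{t\to\infty}\frac1t\log(\mathbb E\times P)(\mathsf S^{(2)}_t)\le-\beta\alpha$, it follows that $(\mathbb E\times P)((G_t^1)^c\cap\mathsf S_t)\le e^{-(vc_d\varepsilon^d+\beta\alpha)t+o(t)}<e^{-\beta\alpha t}$ for large $t$, and \eqref{eq282} follows by \eqref{001}.

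The one real obstacle is that key estimate: deleting all traps inside $B(0,\varepsilon t^{1/d})$ must leave the annealed survival rate equal to $\beta\alpha$, i.e.\ a trap-free ball of radius $\varepsilon t^{1/d}$ about the origin is worthless on the exponential scale. Heuristically it is — such a ball has radius $o(t)$, so it permits only $o(t)$ units of unsuppressed branching, leaving intact the dominant $e^{-\beta\alpha t+o(t)}$ cost of keeping the population small until time $t-o(t)$, while the trap-clearing part of the cost is already $e^{-o(t)}$ (a Donsker--Varadhan ball of radius $t^{1/(d+2)}$) in the unperturbed problem. Rigorously I would re-run the upper-bound argument of \cite{O2016} for the uniform field with $B(0,\varepsilon t^{1/d})$ removed: conditioning there on the population and on the radius of the range at an intermediate time, each clearing cost $e^{-vc_d\rho^d}$ that occurs is now at most $e^{-vc_d\rho^d+vc_d\varepsilon^d t}$, and one checks that either $\rho=O(t^{1/d})$ — so the range lies in a ball of radius $o(t)$, only $o(t)$ free-branching time is available, and the branching-suppression term already forces the bound $e^{-\beta\alpha t+o(t)}$ — or $\rho$ is large enough that even after the $e^{vc_d\varepsilon^d t}$ discount the clearing cost falls below $e^{-\beta\alpha t}$; the matching lower bound $(\mathbb E\times P)(\mathsf S^{(2)}_t)\ge(\mathbb E\times P)(\mathsf S_t)=e^{-\beta\alpha t+o(t)}$ is immediate since fewer traps only help. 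This is the step I expect to require the most care.
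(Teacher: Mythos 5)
Your treatment of \eqref{eq281} is essentially the paper's: it is a transcription of the proof of \eqref{eq27} with $\beta$ replaced by the effective skeleton rate $\beta\alpha$, and the paper in fact disposes of \eqref{eq281} by reference to that proof. No comment needed there.

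For \eqref{eq282} you take a genuinely different route, and in doing so you misdiagnose the difficulty. You argue that the scheme of \eqref{eq28} ``does not transfer'' because confinement of the range in a ball of radius $\varepsilon' t^{1/d}$ is not an optimal strategy; but the paper never confines at scale $t^{1/d}$. It keeps the auxiliary confinement event at \emph{linear} scale, $D_t=\{R(t)\subseteq B(0,\varepsilon' t)\}$ with $\varepsilon'$ small, and bounds
\[
(\mathbb{E}\times P)(A_t^c\cap\mathsf{S_t})\le(\mathbb{E}\times P)(A_t^c\cap D_t)+(\mathbb{E}\times P)(\mathsf{S_t}\cap D_t^c),
\]
where the second term is $o\bigl((\mathbb{E}\times P)(\mathsf{S_t})\bigr)$ by \eqref{eq281}. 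The first term is then handled not by the $d=1$ inclusion trick $(G_t^2)^c\cap D_t\subseteq\mathsf S_t$ but by direct computation: $A_t^c$ is a trap-field event and $D_t$ is a BBM event, so on the product space they are independent, giving $(\mathbb{E}\times P)(A_t^c\cap D_t)=\mathbb{P}(A_t^c)\,P(D_t)$. Both factors are explicit — $\mathbb{P}(A_t^c)=\exp(-v\omega_d\varepsilon^d t)$ from the Poisson law, and $P(D_t)=\exp[-(\beta\alpha-\sqrt{\beta\alpha/(2m)}\,\varepsilon')t+o(t)]$ from \cite[Thm.~2]{E2004} — and choosing $\varepsilon'<v\omega_d\varepsilon^d/\sqrt{\beta\alpha/(2m)}$ makes the product decay strictly faster than $e^{-\beta\alpha t}$. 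This is shorter than your proposal and uses only ingredients already in the paper.

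Your alternative, splitting $\Pi=\Pi_1+\Pi_2$ over $B(0,\varepsilon t^{1/d}+r)$ and its complement so that $(\mathbb{E}\times P)((G_t^1)^c\cap\mathsf{S_t})=\mathbb{P}(\Pi_1=0)\,(\mathbb{E}\times P)(\mathsf S^{(2)}_t)$, is structurally correct, but it transfers the whole difficulty into the unproved estimate $\limsup_{t\to\infty}\frac1t\log(\mathbb{E}\times P)(\mathsf S^{(2)}_t)\le-\beta\alpha$, and this is a real gap, not a finishing detail. The naive deduction from $(\mathbb{E}\times P)(\mathsf S_t\cap\{\Pi_1=0\})=\mathbb{P}(\Pi_1=0)(\mathbb{E}\times P)(\mathsf S^{(2)}_t)\le(\mathbb{E}\times P)(\mathsf S_t)$ gives only $(\mathbb{E}\times P)(\mathsf S^{(2)}_t)\le e^{(v\omega_d\varepsilon^d-\beta\alpha)t+o(t)}$, which after multiplying back by $\mathbb{P}(\Pi_1=0)$ returns exactly $e^{-\beta\alpha t+o(t)}$ — the borderline, not a strict improvement. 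So to make your argument close you would genuinely have to re-prove the survival asymptotics of \cite{O2016} with the ball $B(0,\varepsilon t^{1/d})$ removed from the field, which is a new theorem. The paper's factorization into $\mathbb{P}(A_t^c)P(D_t)$ sidesteps this entirely. I would recommend adopting the paper's route for \eqref{eq282}.
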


\begin{proof} For the proof of \eqref{eq281}, refer to the proof of Corollary~\ref{cor1}. 

To prove \eqref{eq282}, let $0<\varepsilon'<\varepsilon$, and define the events indexed by $t$ as 
\begin{equation}
D_t:=\left\{R(t)\subseteq B(0,\varepsilon' t)\right\}, \quad A_t:=\left\{B(0,\varepsilon t^{1/d})\cap K\neq\emptyset\right\}. \nonumber
\end{equation} 
Estimate
\begin{equation} (\mathbb{E}\times P)(A_t^c\cap\mathsf{S_t})\leq(\mathbb{E}\times P)(A_t^c\cap D_t)+(\mathbb{E}\times P)(\mathsf{S_t}\cap D_t^c)=:\mathbf {I}+\mathbf {II}. \label{eq284}
\end{equation} 
Now, $\mathbf {II}=o((\mathbb{E}\times P)(\mathsf{S_t}))$ as $t\rightarrow\infty$ by \eqref{eq281}. By the independence of the BBM and the Poisson random measure, 
\begin{equation} \mathbf {I}=\mathbb{P}(A_t^c) P(D_t) 
= \exp\left(-v\omega_d \varepsilon^d t\right) \exp\left[-\left(\beta\alpha-\sqrt{\beta\alpha/(2m)}\varepsilon'\right)t+o(t)\right], \label{eq285}
\end{equation}
where $\omega_d$ is the volume of the $d$-dimensional unit ball and $v>0$ is the constant trap intensity. In passing to the second equality of \eqref{eq285}, we have used the definition of Poisson random measure and \cite[Thm.2]{E2004}. Recall that $\beta\alpha$ is the branching rate for the skeleton, where $0<\alpha\leq 1$ if the BBM is supercritical. Finally, since $I=\beta\alpha$ when $d\geq 2$ (see \cite[Thm.1]{O2016}), to complete the proof, choose $\varepsilon'>0$ sufficiently small  to satisfy the inequality
\begin{equation} v\omega_d \varepsilon^d+\beta\alpha-\sqrt{\beta\alpha/(2m)}\varepsilon'>\beta\alpha, \nonumber\end{equation}
that is,
\begin{equation}  \nonumber
\varepsilon' < \frac{v\omega_d \varepsilon^d}{\sqrt{\beta\alpha/(2m)}},
\end{equation}
and then $\mathbf {I}$ in \eqref{eq284} also satisfies $\mathbf {I}=o((\mathbb{E}\times P)(\mathsf{S_t}))$ 
as $t\rightarrow\infty$.
\end{proof}

\bibliographystyle{plain}

\begin{thebibliography}{1}

\bibitem{AH1983}
S. Asmussen and H. Hering.
\newblock {\em Branching Processes}. Birkh\"auser, Basel, 1983.

\bibitem{AN1972}
K. Athreya and P. Ney.
\newblock {\em Branching Processes}. Springer-Verlag, Berlin, 1972.

\bibitem{ADS2016}
S. Athreya, A. Drewitz and R. Sun.
\newblock Subdiffusivity of a random walk among a Poisson system of moving traps on $\mathbb{Z}$. 
\newblock {\em Mathematical Physics, Analysis and Geometry} \textbf{20}, 1 (2017).

\bibitem{BFM2008}
J. Bertoin, J. Fontbona and S. Martinez.
\newblock On prolific individuals in a supercritical continuous-state branching process.
\newblock {\em Journal of Applied Probability} \textbf{45} (2008) 714 -- 726.

\bibitem{R2012}
A. Drewitz, J. G\"{a}rtner, A. F. Ramirez and R. Sun.
\newblock Survival probability of a random walk among a Poisson system of moving traps.
\newblock {\em Probability in Complex Physical Systems}, Springer Proceedings in Mathematics 11, (2012) 119 -- 158.

\bibitem{E2000}
J. Engl\"{a}nder.
\newblock On the volume of the supercritical super-Brownian sausage conditioned on survival.
\newblock {\em Stochastic Processes and their Applications}  \textbf{88} (2000) 225 -- 243.

\bibitem{E2003}
J. Engl\"{a}nder and F. den Hollander.
\newblock Survival asymptotics for branching Brownian motion in a Poissonian trap field.
\newblock {\em Markov Processes and Related Fields} \textbf{9} (2003) 363 -- 389.

\bibitem{E2004}
J. Engl\"{a}nder.
\newblock Large deviations for the growth rate of the support of supercritical super-Brownian motion.
\newblock {\em Statistics and Probability Letters} \textbf{66} (4) (2004) 449 -- 456.

\bibitem{E2007}
J. Engl\"{a}nder.
\newblock Branching diffusions, superdiffusions and random media.
\newblock {\em Probability Surveys} \textbf{4} (2007) 303 -- 364.

\bibitem{E2008}
J. Engl\"{a}nder.
\newblock Quenched law of large numbers for branching Brownian motion in a random medium.
\newblock {\em Annales de l'Institut Henri Poincar\'e - Probabilit\'es et Statistiques} \textbf{44} (2008) 490 -- 518.

\bibitem{KT1975}
S. Karlin and M. Taylor.
\newblock {\em A First Course in Stochastic Processes}. Academic Press, New York, 1975.

\bibitem{LV2009}
J. F. Le Gall and A. V\'eber.
\newblock Escape probabilities for branching Brownian motion among mild obstacles.
\newblock {\em Journal of Theoretical Probability} \textbf{25} (2012) 505 -- 535.

\bibitem{L1992}
R. Lyons.
\newblock Random walks, capacity and percolation on trees.
\newblock {\em Annals of Probability} \textbf{20} (1992) 2043 -- 2088.

\bibitem{MK1975}
H. P. McKean.
\newblock Application of Brownian motion to the equation of Kolmogorov-Petrovskii-Piskunov.
\newblock {\em Communications in Pure and Applied Mathematics} \textbf{28} (1975) 323 -- 331.

\bibitem{MK1976}
H. P. McKean.
\newblock A Correction to ``Application of Brownian motion to the equation of Kolmogorov-Petrovskii-Piskunov''.
\newblock {\em Communications in Pure and Applied Mathematics} \textbf{29} (1976) 553 -- 554.

\bibitem{OC2013}
M. \"{O}z and M. \c{C}a\u{g}lar.
\newblock Tail probability of avoiding Poisson traps for branching Brownian motion.
\newblock {\em Statistics and Probability Letters} \textbf{83} (9) (2013) 2034 -- 2038. 

\bibitem{O2016}
M. \"{O}z.
\newblock Survival of branching Brownian motion in a uniform trap field.
\newblock {\em Statistics and Probability Letters} \textbf{110} (2016) 211 -- 216.

\bibitem{OCE2017}
M. \"{O}z, M. \c{C}a\u{g}lar and J. Engl\"{a}nder.
\newblock Conditional speed of branching Brownian motion, skeleton decomposition and application to random obstacles. 
\newblock{\em Annales de l'Institut Henri Poincar\'e, Probabilit\'es et Statistiques} \textbf{53} (2) (2017) 842 -- 864.
 

\end{thebibliography}

\end{document}